\newtheorem{thm}{Theorem}
\newtheorem{df}[thm]{Definition}
\newtheorem{lem}[thm]{Lemma}
\newtheorem{prop}[thm]{Proposition}
\newtheorem{conj}[thm]{Conjecture}
\newcommand{\myquote}[1]{\vspace{2.5ex}
	\parbox{70ex}{\em #1}\hspace*{3.75ex}($\ast$)\\[2ex]}
\newcommand{\leafedge}{leaf-edge}
\newcommand{\leafedges}{leaf-edges}
\newcommand{\dwadobryzbior}{helpful $2$-set}
\newcommand{\dobrakonfiguracja}{helpful configuration}
\newcommand{\jedencomposed}{$1$-composed}
\newcommand{\dwacomposed}{$2$-composed}
\newcommand{\dwadobrakonfiguracja}{helpful $2$-configuration}
\newcommand{\dobrysingleton}{helpful $1$-configuration}
\newcommand{\zk}{\mathbb{Z}_k}
\newcommand{\zdwa}{\mathbb{Z}_2}
\newcommand{\ztrzy}{\mathbb{Z}_3}
\newcommand{\sprig}{sprig}
\newcommand{\sprigs}{sprigs}
\newcommand{\sprigowy}{pendant}
\newcommand{\fullyincident}{fully-incident}
\newcommand{\nonincident}{non-incident}
\newcommand{\containing}{containing}
\author{Michał Tuczyński
  \and Przemysław Wenus
  \and {Krzysztof Węsek}
	}
\title{On cordial labeling of hypertrees}
\affiliation{
  Warsaw University of Technology, Poland}
\keywords{$k$-cordial graph, hypergraph, hypergraph labeling, hypertree}
\begin{document}
\publicationdetails{21}{2019}{4}{1}{4081}
\maketitle
\begin{abstract}
 	Let $f:V\rightarrow\mathbb{Z}_k$ be a vertex labeling of a hypergraph $H=(V,E)$. This labeling induces an~edge labeling of $H$ defined by $f(e)=\sum_{v\in e}f(v)$, where the sum is taken modulo $k$. We say that $f$ is $k$-cordial if for all $a, b \in \mathbb{Z}_k$ the number of vertices with label $a$ differs by at most $1$ from the number of vertices with label $b$ and the analogous condition holds also for labels of edges.
	If $H$ admits a $k$-cordial labeling then $H$ is called $k$-cordial. The existence of $k$-cordial labelings has been investigated for graphs for decades. Hovey~(1991) conjectured that every tree $T$ is $k$-cordial for every $k\ge 2$. Cichacz, Görlich and Tuza~(2013) were first to investigate the analogous problem for hypertrees, that is, connected hypergraphs without cycles. The main results of their work are that every $k$-uniform hypertree is $k$-cordial for every $k\ge 2$ and that every hypertree with $n$ or $m$ odd is $2$-cordial. Moreover, they conjectured that in fact all hypertrees are $2$-cordial. In this article, we confirm the conjecture of Cichacz et al. and make a step further by proving that for $k\in\{2,3\}$ every hypertree is $k$-cordial.
\end{abstract}

\section{Introduction}

Graph labeling problems have been intensively studied for decades since the initiatory work of Rosa~\cite{Rosa1967}. However, much less is known about labelings of hypergraphs. In this article we consider the problem of cordial labeling of hypergraphs introduced by Cichacz, Görlich and Tuza \cite{CichaczGorlichTuza2013}.

Let $f:V\rightarrow\mathbb{Z}_k$ be a vertex labeling of a hypergraph $H=(V,E)$. This labeling induces an edge labeling of $H$ defined by $f(e)=\sum_{v\in e}f(v)$, where the sum is taken modulo $k$. 
We say that $f$ is $k$-cordial if for all $a, b \in \mathbb{Z}_k$ the number of vertices with label $a$ differs by at most $1$ from the number of vertices with label $b$ and the analogous condition holds also for labels of edges.
If $H$ admits a $k$-cordial labeling then $H$ is called $k$-cordial.
$2$-cordial labelings in the case of graphs were introduced by Cahit \cite{Cahit1987} (under the name of cordial labeling) as a weakened version of well known graceful and harmonious labelings. On the other hand, harmonious labeling and elegant labeling, other known concepts, are special cases of $k$-cordial labeling. More precisely, harmonious labeling can be defined as $|E|$-cordial labeling and elegant labeling can be defined as $|V|$-cordial labeling. For more information about various graph and hypergraph labeling problems, we refer to an extensive dynamic survey of Gallian \cite{Gallian2014}.

Since the work of Cahit, $k$-cordial labelings have been studied in numerous publications (see~\cite{Gallian2014}). Lee and Liu \cite{LeeLiu1991} and Du \cite{Du1997} proved that a complete $\ell$-partite graph is $2$-cordial if and only if at most three of its partite sets have odd cardinality. Cairnie and Edwards \cite{CairnieEdwards2000} showed that in~general the~problem of deciding whether a graph is $2$-cordial is NP-complete (the authors suggest that the~problem is NP-complete even in the class of connected graphs of diameter $2$).
Cordial labeling seems to be particularly interesting for trees.
The original motivation for Cahit to investigate cordial labeling were the~Graceful Tree Conjecture of Rosa \cite{Rosa1967} and the Harmonious Tree Conjecture of Graham and Sloane \cite{GrahamSloane1980}.
Both conjectures were (and still are) far from being solved. However, Cahit was able to prove a~common relaxation: all trees are $2$-cordial.
Hovey \cite{Hovey1991} conjectured that every tree is $k$-cordial for any $k\ge 2$ and proved this to be true for $k=3,4,5$. Note that positive solution to his conjecture would imply the Harmonious Tree Conjecture.
Hegde and Murthy \cite{HedgeMurthy2014} showed that every tree is $k$-cordial for prime $k$ provided that $k$ is not smaller than the number of vertices. Recently, it~has been showed by Driscoll, Krop and Nguyen \cite{DriscollKropNguyen2017} that the Hovey's conjecture is true for $k=6$.
For other values~of~$k$, the conjecture is still open.

Generalizations of labeling problems to hypergraphs have been studied for example for magic labelings \cite{Trenkler2001Magic,Trenkler2001SuperMagic}, antimagic labelings \cite{Cichacz2016,JavaidBhatti2012,SONNTAG2002} and sum number and integral sum number \cite{SonntagTeichert2000, SonntagTeichert2001}. 
Cichacz, Görlich and Tuza \cite{CichaczGorlichTuza2013} were first to consider $k$-cordial labelings of hypergraphs.
They provided some sufficient conditions for a hypergraph to be $k$-cordial. Their main results state that every $k$-uniform hypertree is $k$-cordial for every $k\ge 2$ and that every hypertree $H$ with $|V(H)|$ or $|E(H)|$ odd is $2$-cordial (for the definition of a hypertree, see next section). The second result is a~partial answer to the following conjecture posed in their article:
\begin{conj}[\cite{CichaczGorlichTuza2013}]\label{conj:2-cordial}
	All hypertrees are $2$-cordial.
\end{conj}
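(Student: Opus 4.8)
The plan is to prove the conjecture by induction, peeling off a \leafedge{} at each step --- a \leafedge{} being an edge all but one of whose vertices have degree $1$, which always exists in a hypertree (it can be found as the last edge of a suitable ordering of the edge set). Cichacz, Görlich and Tuza have already settled the cases $\nt:=|V(H)|$ odd and $\mt:=|E(H)|$ odd, so the genuinely new situation is $\nt$ and $\mt$ both even, where being $2$-cordial means that exactly $\nt/2$ vertices and exactly $\mt/2$ edges carry each of the two labels. Peeling a \leafedge{} changes the parities of $\nt$ and $\mt$, so the induction cannot remain inside the both-even world; instead I would run it for \emph{\seedtrees{}} --- hypertrees some of whose vertices are already labeled --- showing that each such object has a $2$-cordial extension of its seeds for every admissible pair of class sizes, which contains the both-even case (and re-derives the rest). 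The base case is a \seedstar{}: a hyperstar with some pre-labeled vertices, where once the label of the centre is fixed the edges behave independently and a direct count finishes the job.

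For the inductive step take a \leafedge{} $e$, let $P$ be its $|e|-1$ private (degree-$1$) vertices, let $v$ be the vertex it shares with the rest of $H$, and set $H'=H-e-P$. The induction hypothesis gives a suitable labeling $f'$ of $H'$, and one wants to extend it to $P$: the $|e|-1$ new vertex labels must keep the vertex classes balanced, while the parity of the number of $1$'s placed on $P$, added to $f'(v)$, has to give $e$ exactly the color that keeps the edge classes balanced. When $|e|-1\ge 2$ there is enough room to satisfy both demands at once; when $|e|-1=1$ --- a \sprig{}, i.e. an ordinary pendant edge --- the single private vertex cannot serve two masters, and a maximal chain of \sprigs{} (a \twig{}) carries the conflict along its whole length. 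The remedy is to strengthen the induction hypothesis so that $H'$ comes not merely with a cordial labeling but with one that carries a \dobryzbior{} --- a small set of vertices on which the labeling may be perturbed (exchanging a $0$ with a $1$, or toggling a vertex of prescribed degree) while staying cordial. The freedom of this set is then spent on recolouring $f'(v)$, or a vertex near $v$, to absorb the parity forced by $P$; and one has to check that the enlarged hypertree again admits a cordial labeling together with a \dobryzbior{}. Tracking which \dobrakonfiguracja{} survives each reduction is the technical heart of the argument.

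The main obstacle, absorbing most of the case analysis, is exactly this clash between small leaf-edges and the global parity identity $\sum_{e\in E}f(e)\equiv\sum_{v\in V}\deg(v)\,f(v)\pmod 2$, which, once the vertex classes have fixed sizes, partly predetermines the edge-class split and hence pins down how much slack is left to work with. Concretely I expect to: split leaf-edges into \sprigs{} and larger ones; group consecutive \sprigs{} into \twigs{} and treat each \twig{} as a single reduction unit carrying its own helpful set; prove that deleting a \twig{} or a large \leafedge{} leaves a \seedtree{} that still has a cordial labeling with a helpful set of the appropriate flavour, while keeping track of a handful of residues of $\nt$ and $\mt$ modulo small numbers; and finally verify the \seedstar{} base case in each of those residue classes. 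The delicate point running through all of this is that peeling must never consume \emph{all} of the flexibility --- one always needs a swap held in reserve to fix the last edge --- and making that bookkeeping airtight is what forces the somewhat elaborate hierarchy of helpful configurations.
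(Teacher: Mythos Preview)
Your plan shares the paper's key instinct --- carry a small reserve of flexibility (a \dobryzbior) through the induction so the last edge label can be repaired --- but the reduction step is organised quite differently, and one of your claimed shortcuts is false.

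The paper does \emph{not} peel one \leafedge{} at a time. In the paper a \sprig{} is not a pendant size-$2$ edge but a package of \emph{two} edges $e_1,e_2$ together with two designated vertices $v_1\in e_1,\ v_2\in e_2$ (adjacency pattern $M_1$ or $M_2$). Deleting such a \sprig{} lowers $m$ by $2$, so the entire induction lives inside $m\equiv 0\pmod 2$; there are no \seedtrees, no parity flips, no \twigs. The helpful object is a single vertex $u$ of even degree (whose existence follows from the degree formula), and the strengthened statement is simply ``$T$ has a $2$-cordial labeling strong on $\{u\}$''. Two vertex labels $\{0,1\}$ and two edge labels $\{0,1\}$ are manufactured simultaneously, so balance is automatic except in one configuration, where relabelling $u$ fixes it while keeping edge counts intact because $d(u)$ is even. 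The odd-$m$ case is then a one-liner.

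Your scheme has a concrete gap at ``when $|e|-1\ge 2$ there is enough room to satisfy both demands at once''. Take $|e|=3$ with $n,m$ both even. Then $H'=H-e-P$ has $n-2$ vertices and $m-1$ edges; any cordial labeling of $H'$ uses exactly $(n-2)/2$ of each vertex label, so the two private vertices are forced to receive one $0$ and one $1$, and $f(e)=f'(v)+1$ is determined with no freedom at all. Whether this lands in the under- or over-represented edge class of $H'$ is not under your control (it depends on $f'(v)$, which is part of the labeling you are handed). The same rigidity recurs whenever $|e|-1$ is even and $n$ is even. So the dichotomy ``$|e|-1=1$ is the hard case, $|e|-1\ge 2$ is free'' is wrong, and every \leafedge{} --- not just size-$2$ ones --- may require spending the helpful set. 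Your outline does not explain how a fresh helpful set is regenerated after each such expenditure, nor how the vaguer ``every admissible pair of class sizes'' hypothesis interacts with the fact that the forced value of $f(e)$ depends on the very labeling being chosen. The paper's two-edges-at-a-time device is precisely what avoids this bookkeeping.
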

Note that one cannot hope for $2$-cordiality of forests, since the forest consisting of $2$ disjoint edges of size $2$ is not $2$-cordial.
In this article we settle Conjecture~\ref{conj:2-cordial} and make a step further by proving a~stronger statement: 
\begin{thm}\label{tw:glowne}
	All hypertrees are $k$-cordial for $k\in \{2,3\}$.
\end{thm}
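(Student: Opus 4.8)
\medskip
\noindent\emph{Overview of the plan.} My plan is an induction on the size of the hypertree, peeling off one edge at a time. First I would use the fact that a hypertree with at least one edge has a \leafedge{}: an edge $e$ meeting the rest of the hypertree in a single vertex $v$, every other vertex of $e$ being \sprigowy{}; this follows by building the hypertree edge by edge while keeping it connected and acyclic and taking $e$ to be the edge added last. The first attempt would be to delete $e$ and its \sprigowy{} vertices, $k$-cordially label the smaller hypertree $H'$ by induction, and then label the deleted vertices so as to keep the vertex labels balanced while steering $f(e)$ to whichever residue is under-represented among the edges. I expect this to fail outright: once $H'$ is labelled, $f(v)$ is fixed, so $f(e)=f(v)+\sum f(u)$ over the $t$ deleted \sprigowy{} vertices can reach only certain residues, and maintaining the vertex balance restricts which label-multisets may sit on those $t$ vertices; when $t\in\{0,1,2\}$ the value $f(e)$ and the \sprigowy{} labels are too tightly coupled to be chosen independently.

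\emph{Strengthening the hypothesis.} To break this coupling I would pass to \seedtrees{}: a hypertree together with a distinguished set of \sprigowy{} vertices, the \emph{seeds}, whose labels are postponed to the very end of the construction. The strengthened inductive statement would be that every \seedtree{} admits a labelling that becomes $k$-cordial once the seeds are labelled appropriately and that moreover realises a prescribed label on a designated \leafedge{} (or a designated \sprigowy{} vertex), where the set of prescriptions one is allowed to demand is packaged into a suitable notion — a \dobrakonfiguracja{}, resp.\ a \kdobryzbior{}, in the paper's language — engineered to be weak enough to hold for the base objects yet strong enough to be inherited under the reductions below.

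\emph{Reductions and base case.} The plan is then to process a \seedtree{} by repeatedly applying simplification rules \rjeden{}, \rdwa{}, \rtrzy{}, \rcztery{}: each would locate a peripheral gadget — a \twig{}, a \sprig{}, a \leafedge{} carrying enough seeds, or a \preleafedge{} — and delete it or trade it for a handful of seeds, strictly shrinking a size measure while preserving the invariant, so that a \dobrakonfiguracja{} of the reduced \seedtree{} lifts to one of the original by explicitly labelling the removed gadget; here the small modulus helps, since in $\zdwa$ and $\ztrzy$ only a few elements are needed to realise any prescribed sum and the bounded number of labels keeps the balancing bookkeeping finite. When no rule applies, the \seedtree{} should have collapsed to a \seedstar{}, and for a \seedstar{} I would build the labelling by hand: distribute labels over the private vertices of the star's edges to even out the edge labels, use the centre to absorb the remaining degree of freedom, and use the seeds to restore the vertex balance — a finite case analysis on the residues of $|V|$ and $|E|$ modulo $k$ together with the number of seeds, the genuinely delicate low-count cases being exactly those the rules \rjeden{}--\rcztery{} are meant to keep away from the base. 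Finally, since a hypertree is just a \seedtree{} with no seeds, applying the statement to it with any chosen \leafedge{} yields a $k$-cordial labelling, which proves Theorem~\ref{tw:glowne}; the case $k=2$ in particular settles Conjecture~\ref{conj:2-cordial}.

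\emph{Where the difficulty lies.} I expect the hard part to be the joint design of the invariant (the \dobrakonfiguracja{}/\kdobryzbior{} notion) and of the rule set: the invariant must be satisfied by every \seedstar{} and yet be propagated by each of \rjeden{}--\rcztery{}, all while respecting two competing balance requirements — on vertex labels and on edge labels — that pull against each other precisely when there are only a few seeds. I also expect that extending the argument from $k=2$ to $k=3$ costs a further layer of casework, since in $\ztrzy$ a single spare seed no longer suffices to repair an imbalance; that is presumably why several reduction rules are needed rather than one.
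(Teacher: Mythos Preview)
Your proposal is a plan rather than a proof, and the plan carries essentially no content: you never state the strengthened inductive hypothesis precisely, you name reduction rules \rjeden--\rcztery\ but never say what any of them does, you assert that the reduction terminates at a \seedstar\ without argument, and you declare the \seedstar\ base case to be ``a finite case analysis'' without performing it. The entire weight of the theorem lies in exactly these pieces --- the design of the invariant so that it (i) holds at the base, (ii) lifts through each reduction, and (iii) reconciles the vertex-balance and edge-balance constraints --- and none of them is present. As written there is nothing to verify.

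For comparison, the paper's proof is organised quite differently from what you outline. It does \emph{not} peel off one edge at a time, does not introduce seeds, and does not reduce to a hyperstar base case. Instead it removes $k$ edges and $k$ vertices simultaneously (a \sprig), so that the residues of $n$ and $m$ modulo $k$ are preserved and the induction lives entirely inside the case $m\equiv 0\pmod k$, with base case $m=0$. The strengthened hypothesis is concrete: the labeling is required to be \emph{strong on} a fixed set $A$ (a single vertex of degree divisible by $k$ for $k=2$; for $k=3$ either such a vertex or a specific non-adjacent pair with $d(u_1)+d(u_2)\equiv 0\pmod 3$). The point of $A$ is that shifting all its labels by a constant leaves the multiset of edge labels unchanged, so when the naive extension on a \sprig\ fails one first perturbs $f$ on $A$ and then extends. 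The existence of such an $A$ in every hypertree with $m\equiv 0\pmod k$ is a separate lemma (easy for $k=2$ from the degree-sum formula; a short induction for $k=3$), and the extension step is a small explicit table of solutions to $y+M x\in\mathcal P$ for each of the finitely many \sprig\ adjacency matrices $M$. The cases $m\not\equiv 0\pmod k$ are then handled by stripping one or two \leafedges\ and invoking the main case. This is both shorter and sharper than the scheme you sketch: there is one reduction (remove a pendant \sprig), one invariant (strong on $A$), and the ``finite case analysis'' is a handful of $3\times 3$ linear problems over $\mathbb Z_k$ rather than a classification of \seedstars.
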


Theorem~\ref{tw:glowne} generalizes results of Cahit and Hovey on $k$-cordiality of tress for $k\in \{2,3\}$.
We find this theorem to be a good indication that the following generalization of the Hovey's conjecture (in~fact, already tentatively suggested in \cite{CichaczGorlichTuza2013}) can be true:
\begin{conj}
	All hypertrees are $k$-cordial for every $k\ge 2$.
\end{conj}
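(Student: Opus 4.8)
The plan is to prove the statement by strong induction on the total size $|V(H)|+|E(H)|$ of the hypertree $H$, working in the model where the bipartite incidence graph of $H$ (vertices on one side, hyperedges on the other, adjacency given by membership) is a tree. First I would record the basic structural fact that every hypertree with more than one incidence has a reducible end: either a \sprigowy{} vertex lying in a single hyperedge, or a \leafedge{} --- a hyperedge meeting the rest of $H$ in exactly one vertex, whose remaining vertices form a \sprig{} of \sprigowy{} vertices hanging off a single attachment vertex. This gives the skeleton of the induction: peel off such an end, invoke the inductive hypothesis on the smaller hypertree, and then re-extend the labeling across the peeled part. For $k\in\{2,3\}$ the conclusion is Theorem~\ref{tw:glowne}, so the real content lies in $k\ge 4$, where I would aim for a single uniform argument rather than a residue-by-residue case analysis.

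The device that makes the induction go through is to strengthen the statement to one about \seedtrees{}: hypertrees carrying a bounded collection of pre-labeled or partially constrained seeds that contribute a known surplus or deficit to each of the $k$ vertex-classes and $k$ edge-classes. The inductive claim would assert that a \seedtree{} admits a labeling whose deviation from perfect balance is controlled entirely by the seed deviation, and --- crucially --- that one may additionally prescribe the induced label of the attachment edge, or the label of the attachment vertex, within a prescribed coset. This strengthening is exactly what lets the extension step re-synchronise both balance conditions at once: when I reattach a \leafedge{} with a \sprig{} of $t$ private vertices, the sum defining its edge label is the known contribution of the attachment vertex plus the freely chosen labels of the $t$ private vertices, so these private labels furnish $t$ degrees of freedom over $\zk$ that must simultaneously drive the $t$ new vertices toward the currently underfull label-classes and force $f(e)$ into the currently underfull edge-class.

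The technical heart is therefore a local solvability lemma: given the present deficits among the $k$ vertex-classes and the target edge-class, and given $t$ free labels summing to a prescribed residue, decide whether the $t$ labels can be chosen to meet both demands. When $t$ is large relative to $k$ this is a routine counting argument --- there is enough slack to distribute the \sprig{} almost arbitrarily and still correct the single edge label by adjusting one vertex --- and I would dispatch that regime first. The genuinely hard regime, and the main obstacle to the full conjecture, is when every reducible end is a \leafedge{} with a single private vertex ($t=1$): then one free label must do double duty, both balancing its own vertex-class and acting as the sole lever setting $f(e)$, so the local problem can be infeasible in isolation. The argument must then process a whole family of such thin \leafedges{} attached to a common region --- as in a \seedstar{} --- and solve a global system of congruences over $\zk$ matching which private label goes where against which edge-class each thin edge must hit, subject to the ambient balance constraints. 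Showing that this system is always solvable is precisely the step the present paper resolves only for $k\in\{2,3\}$; for general $k$ I expect it to require either a transportation/flow formulation over $\zk$ together with a parity-type invariant ruling out the bad residue patterns, or a new global charging scheme, and this is where I anticipate the proof to become genuinely difficult.
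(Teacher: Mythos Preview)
The statement you are addressing is a \emph{conjecture}, not a theorem: the paper does not prove it and offers no proof to compare against. What the paper actually establishes is Theorem~\ref{tw:glowne}, the special cases $k\in\{2,3\}$, and then poses the general statement as an open problem motivated by that partial result. So there is no ``paper's own proof'' here.

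Your proposal is not a proof either, and you say so yourself. The outline is reasonable as a strategy --- peel off a pendant piece, label the smaller hypertree inductively, re-extend --- and it is indeed in the spirit of what the paper does for $k=2,3$. But the entire difficulty is concentrated in the step you flag at the end: when the only reducible ends are thin (a leaf-edge with a single private vertex), one free label in $\zk$ must simultaneously correct the vertex balance and hit the required edge residue, and you have not shown this global system is solvable for $k\ge 4$. Saying you ``expect it to require'' a flow formulation or a charging scheme is an acknowledgement of a gap, not a resolution of one. The paper's method for $k=2,3$ handles exactly this obstruction by introducing the auxiliary ``strong on $A$'' condition (a helpful configuration whose labels can be shifted to rescue an otherwise infeasible extension) and then verifying, case by case via explicit $M$-solutions, that the finitely many residue patterns all admit an extension; the authors themselves remark in the Conclusions that the complication grows with $k$ and that new ideas are likely needed. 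Your \seedtree{} strengthening is a plausible generalisation of that device, but until the local solvability lemma is actually proved for all $k$, what you have is a proof sketch for a conjecture that remains open.
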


We prove three cases of Theorem~\ref{tw:glowne} separately. They both follow the same method, although the~case of $k=3$  is more complicated and requires some additional notions. 
The main idea is the~following. For a given hypertree, we choose a certain configuration $S$ of $k$ edges and $k$ vertices and we inductively label $H-S$. Having this partial labeling $f$, we try to extend it to entire $H$. Sometimes this is not possible, however, it appears that then it is enough to modify $f$ on small parts of $H-S$ to succeed.

\section{Preliminaries}

A \emph{hypergraph} H is a pair $H = ( V , E )$, where $V$ is the set of vertices and $E$ is a set of non-empty subsets of $V$ called edges. 
We consider finite (not necessarily uniform) hypergraphs with edges of cardinality at least $2$. Hypergraphs with no edges are called \emph{trivial}. For a hypergraph $H$, by $n(H)$ and $m(H)$ we denote the~number of vertices and edges of $H$, respectively. 
The \emph{degree} of vertex $v$, denoted by $d(v)$, is the number of edges containing $v$. An \emph{isolated} vertex is a~vertex of degree $0$.

The \emph{incidence graph} $G_H$ of a hypergraph $H$ is a bipartite graph with the~vertex set $V(H)\cup E(H)$ and the edge set $\{ve: v\in V(H), e\in E(H), v\in e\}$. By a~\emph{cycle} in a hypergraph $H$ we understand a~cycle in~$G_H$. We say that a hypergraph $H$ is \emph{connected} if $G_H$ is connected. A \emph{hypertree} is a connected hypergraph with no cycles. Equivalently, a hypergraph $H$ is a hypertree if $G_H$ is a tree.
Observe that if two edges $e$ and $e'$ have two common vertices $v$ and $v'$ then $vev'e'v$ is a cycle. Thus our definition implies that a hypertree is a linear hypergraph, that is, every two edges can have at most one common vertex. 

Let $H=(V,E)$ be a hypergraph. For a set of vertices $W\subset V$ and a set of edges $F\subset E$, we denote $H-W=(V-W,E-\{e: \exists v \in e, v\in W \})$ and $H-F=(V,E-F)$. For a vertex $v$ or an edge $e$, we simply write $H-v$ for $H-\{v\}$ and $H-e$ for $H-\{e\}$. For a set of edges $F\subset E$, denote by $H\ominus F$ the hypergraph obtained by removing all isolated vertices from the hypergraph $H-F$.

An edge in a hypergraph is a \emph{\leafedge} if it contains at most one vertex of degree greater than $1$, otherwise we call this edge \emph{internal}. A vertex of degree one contained in a \leafedge\ is called a \emph{leaf}. Note that if $T$ is a hypertree with more than one edge, then $e\in E(T)$ is a \leafedge\ if and only if $T-e$ consists of a~hypertree and some isolated vertices. Observe that if a hypertree has more than one edge, then it has at least two \leafedges. A \emph{hyperpath} is a hypertree with at most $2$ \leafedges. A~\emph{hyperstar} is a hypertree of which every edge is a~\leafedge.

We will use the following formula for the number of edges of a hypertree.
\begin{prop}\label{liczba_krawedzi}
	If $T$ is a non-trivial hypertree then $$m(T)=1+\sum\limits_{v\in V(T)}(d(v)-1).$$
\end{prop}

By $\mathbb{Z}_k$ we denote the ring of integers modulo $k$. 
When we compare the~elements of $\mathbb{Z}_k$, we use the~order $0<\ldots <k-1$. The set of $p \times q$ matrices over $\mathbb{Z}_k$ is denoted by $M_p^q(\mathbb{Z}_k)$.

For the rest of this section, we assume that $k$ is a fixed positive integer greater than $1$. Let $f:V\rightarrow\mathbb{Z}_k$ be a vertex labeling of a hypergraph $H=(V,E)$ with $n$ vertices and $m$ edges.
This vertex labeling induces an edge labeling of $H$, also denoted by $f$, defined by $f(e)=\sum_{v\in e}f(v)$ (the sum is taken modulo $k$) for $e\in E$. We allow this abuse of notation, as it is commonly used in this topic.
Denote by $n_a(f)$ and $m_a(f)$ the numbers of vertices and edges, respectively, labeled with $a$. We say that $f$ is \emph{$k$-cordial} if for all $a,b \in \mathbb{Z}_k$ we have  $|n_a(f)-n_b(f)|\le 1$ and $|m_a(f)-m_b(f)|\le 1$. If $H$ admits a~$k$-cordial labeling then $H$ is called \emph{$k$-cordial}.

Now we will give the definition of a sprig, a key notion in this paper. Sprigs will be used for the~induction step in the main cases of our proofs. That is, in order to label a hypertree $T$, we will delete a~certain sprig, label the smaller hypergraph by induction hypothesis, and then label the vertices of the~sprig in~order to obtain a cordial labeling of $T$.

\begin{df}
	Let $H=(V,E)$ be hypergraph. A sequence $S=(e_1,\ldots,e_k;v_1,\ldots,v_k)$, where every $e_i$ is an edge of $H$ and every $v_i$ is a vertex of $H$, is called a \emph{\sprig} if
	\begin{enumerate}
		\item $v_i \in e_i$ for every $i$,
		\item $v_i$ is an isolated vertex of $H-\{e_1,\ldots,e_k\}$ for every $i$.
	\end{enumerate}
\end{df}

For a hypergraph $H$ and a sprig $S=(v_1,...,v_k; e_1,...,e_k)$,
denote by $H-S$ and $H\ominus S$ the~hypergraphs $H-\{v_1,\ldots,v_k\}$ and $H\ominus \{e_1,...,e_k\}$, respectively. A \sprig\ $S$ in a hypertree $T$ is called \emph{\sprigowy} if $T-S$ has at most one non-trivial component.

Let $H=(V,E)$ be a hypergraph and let $A\subset V$. We say that a sprig $S=(e_1,\ldots,e_k;v_1,\ldots,v_k)$
is:

\begin{itemize}
	\item \emph{\containing} $A$ 
	if every vertex of $S$ belongs to $A$,
	\item \emph{\fullyincident} with $A$ 
	if no vertex of $S$ belongs to $A$ and every edge of $S$ contains a vertex from~$A$,
	\item \emph{\nonincident} with $A$ if none of the edges of $S$ contain a vertex from $A$.
\end{itemize}

Let $S=(e_1,\ldots,e_k;v_1,\ldots,v_k)$ be a sprig.
The \emph{adjacency matrix of $S$} is a matrix $M(S)=(a_{ij})_{k\times k}$ defined by 
$a_{ij}=\left\{
\begin{array}{ll}
1 & \text{if } v_j\in e_i\\
0 & \text{otherwise}\\
\end{array}
\right.
$\hspace{-1ex}.
Let $S$ be a sprig and $M\in M_k^k(\mathbb{Z}_k)$. We call $S$ an \emph{$M$-\sprig} if $M(S)=M$. 

Let $A$ be a set of pairwise non-adjacent vertices in a hypergraph $H$. We say that a cordial labeling of $H$ is \emph{strong on $A$} if every vertex of $A$ has a different label and the numbers of edges 
intersecting with $A$ labeled with $a$ are equal for all labels $a\in \zk$.

Observe that adding isolated vertices does not change the cordiality of a hypergraph. More precisely, we have the following proposition.

\begin{prop}\label{prop:seedy}
	Let $H=(V,E)$ be a hypergraph and let $H'$ be a hypergraph obtained by adding any number of isolated vertices to $H$. If $H$ is $k$-cordial, then $H'$ is $k$-cordial. Moreover if $A\subset V$ and $H$ has a $k$-cordial labeling strong on $A$, then $H'$ has a $k$-cordial labeling strong on $A$.
	In particular, every hypergraph without edges is $k$-cordial. 
\end{prop}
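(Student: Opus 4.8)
The plan is to start from a $k$-cordial labeling $f$ of $H$ and extend it to a labeling $f'$ of $H'$ by choosing labels only for the newly added isolated vertices, leaving the labels of the vertices of $H$ untouched. The key observation is that an isolated vertex lies in no edge, so the edge labeling induced by $f'$ coincides with that induced by $f$; in particular $m_a(f')=m_a(f)$ for every $a\in\zk$, so the edge-balance condition of $k$-cordiality is inherited for free. Thus the whole question reduces to assigning labels to the added vertices so that the vertex counts remain balanced, and this is really the only step needing any care — there is no genuine obstacle here.

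For that balancing step I would use a greedy argument. Let $t$ be the number of added vertices and process them one at a time, each time giving the current vertex a label $a\in\zk$ for which the present number of vertices labeled $a$ is smallest (ties broken arbitrarily). I claim the invariant ``$\max_{a}n_a-\min_{a}n_a\le 1$'' is maintained at every step: it holds at the start since $f$ is $k$-cordial, and incrementing a count that is currently minimal cannot produce a gap larger than $1$ (a one-line case check according to whether the minimum was attained uniquely). After all $t$ steps we obtain a labeling $f'$ of $V(H')$ with $|n_a(f')-n_b(f')|\le 1$ for all $a,b$, which combined with the first paragraph shows that $f'$ is $k$-cordial.

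For the ``strong on $A$'' refinement, note that $A\subset V(H)$, so the vertices of $A$ and the edges intersecting $A$ all live in $H$; passing from $f$ to $f'$ changes neither the (pairwise distinct) labels of the vertices of $A$ nor the labels of the edges meeting $A$. Hence if the starting labeling is $k$-cordial and strong on $A$, the extension produced above is $k$-cordial (by the argument just given) and still strong on $A$. Finally, a hypergraph without edges is obtained from the empty hypergraph by adding isolated vertices, and the empty hypergraph is trivially $k$-cordial (all counts equal $0$), so the ``in particular'' clause is an immediate special case.
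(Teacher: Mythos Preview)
Your argument is correct. The paper itself states Proposition~\ref{prop:seedy} without proof, treating it as an immediate observation, so there is no ``paper's own proof'' to compare against. Your write-up simply fills in the routine details the authors chose to omit: the key points --- that isolated vertices touch no edge (so $m_a(f')=m_a(f)$), that the remaining vertex-balance condition can be maintained by a greedy assignment, and that the ``strong on $A$'' data lives entirely inside $H$ and is therefore unaffected --- are exactly the ones implicit in the paper's remark that ``adding isolated vertices does not change the cordiality of a hypergraph.''
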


Let $f$ be a labeling of a hypergraph $H=(V,E)$ and let $A\subset V$ and $x\in\zk$. We say that a~labeling $g$ of $H$ is obtained from $f$ by \emph{adding $x$ on $A$} if it is defined in the following way: \linebreak
$g(v)=\left\{
\begin{array}{ll}
f(v)+x & \text{if } v\in A\\
f(v) & \text{otherwise}\\
\end{array}
\right. 
$.

\section{$2$-cordiality of hypertrees}\label{sec:k2}

The aim of this section is to prove Theorem~\ref{tw:glowne2}, which states that all hypertrees are $2$-cordial.
The~case of $m=_2 0$ is the main part of the proof. We prove this case by induction on $m$.
The induction step goes as follows:
we delete from a hypertree $T$ a certain sprig $S$, label $T-S$ by the induction hypothesis, and then label the removed vertices. In the beginning of the section, we present sprigs that will be used in this proof. Lemma~\ref{lem:k2red} shows how to obtain a cordial labeling of $T$ from a cordial labeling of~$T-S$.
In some cases, we cannot simply extend the labeling, but we will succeed if we change the~label of~a~vertex of even degree first. We want to ensure that this operation will not unbalance the~edge labels. Therefore, we prove a stronger statement: for any vertex of even degree $u$, there exists a cordial labeling of $T$ which is strong on $\{u\}$.  Lemma~\ref{lem:k2conf} assures that vertex of even degree always exists.

In this section we will use $M_i$-\sprigs\ for $i=1,2$, where:
\vspace{2ex}

$M_1=\begin{bmatrix}
1 & 0\\
0 & 1
\end{bmatrix}$,
$M_2=\begin{bmatrix}
1 & 0\\
1 & 1
\end{bmatrix}$.
\\

\noindent For an illustration, see Figure~\ref{fig:1}.


\begin{figure}[H]		
	\centering
		\subfigure[$M_1$-sprig]{\includegraphics[scale=0.2]{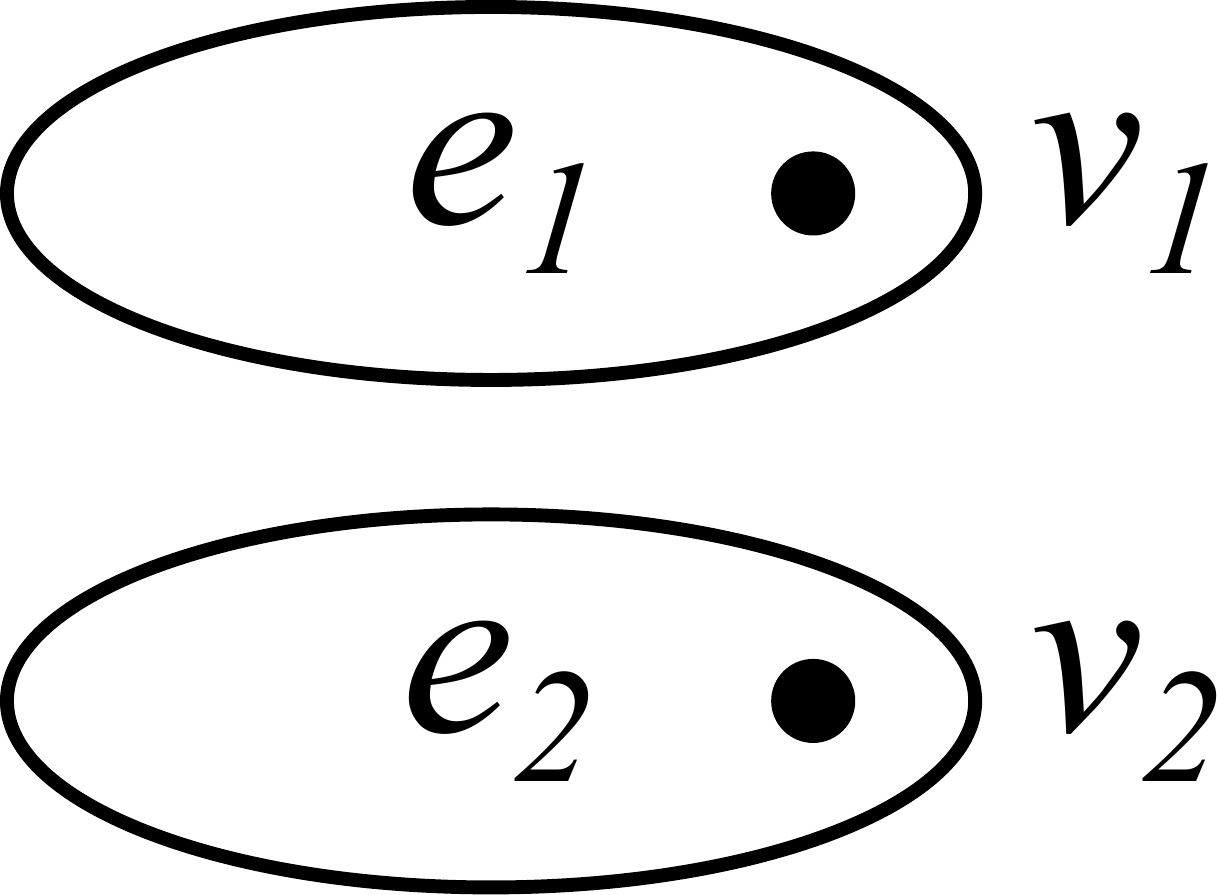}} \hfil	
		\subfigure[$M_2$-sprig]{\includegraphics[scale=0.2]{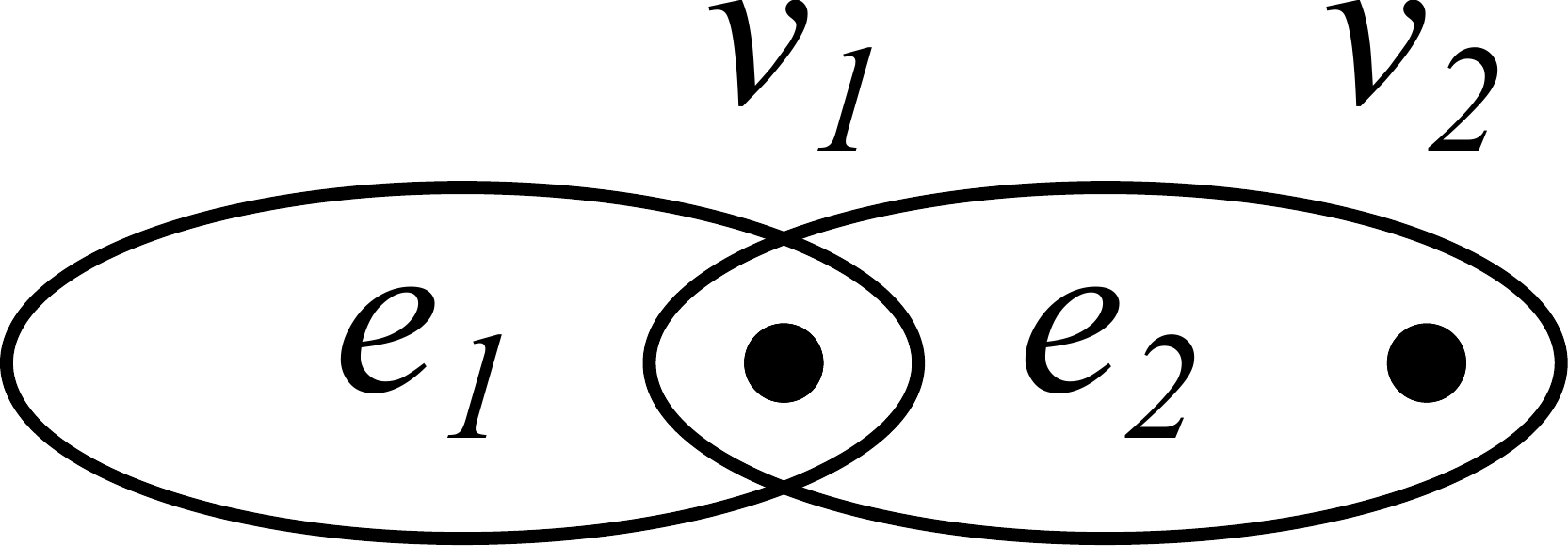}}
	\caption{Types of sprigs}\label{fig:1}		
\end{figure}

\begin{lem}\label{lem:k2red}
	
	Let $H$ be a hypergraph, $u$ a vertex of even degree in $H$ and $S$
	\begin{enumerate}\itemsep0em 
		\item an $M_1$-\sprig\ \nonincident\ or \fullyincident\ with $\{u\}$,
		\item an $M_2$-\sprig\ \nonincident\ with $\{u\}$.
	\end{enumerate}
	If $H-S$ has a $2$-cordial labeling strong on $\{u\}$, then $H$ also has $2$-cordial labeling strong on $\{u\}$.
	
\end{lem}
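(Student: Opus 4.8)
The plan is to build a $2$-cordial labeling $g$ of $H$ strong on $\{u\}$ by extending (a possibly slightly perturbed version of) the given labeling $f$ of $H-S$. Write $S=(e_1,e_2;v_1,v_2)$. In all three cases $v_1\neq v_2$, $e_1\neq e_2$, and — since $v_1,v_2$ are isolated in $H-\{e_1,e_2\}$ — the edges $e_1,e_2$ are the only edges of $H$ meeting $\{v_1,v_2\}$; hence $H-S=H-\{v_1,v_2\}$ has $n(H)-2$ vertices and its edge set is $E(H)\setminus\{e_1,e_2\}$, i.e.\ $m(H)-2$ edges. I would keep the values of $f$ on $V(H-S)$ and choose $x_1:=g(v_1),\,x_2:=g(v_2)\in\zdwa$; setting $c_i:=\sum_{v\in e_i\setminus\{v_1,v_2\}}f(v)$ one computes $g(e_1)=x_1+c_1$, and $g(e_2)=x_2+c_2$ for an $M_1$-\sprig\ while $g(e_2)=x_1+x_2+c_2$ for an $M_2$-\sprig. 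Relative to $f$ this inserts two new vertices and two new edges, so to keep $g$ $2$-cordial I need the pair of new vertex labels and the pair of new edge labels each to balance what $f$ already has; in the \fullyincident\ subcase of (1) I additionally need $g(e_1)\neq g(e_2)$ so that $g$ stays strong on $\{u\}$ (for $k=2$ this equals $\{g(e_1),g(e_2)\}=\zdwa$, which is anyway what edge-balance asks there).

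The one extra tool I would allow is first replacing $f$ by the labeling obtained by \emph{adding $1$ on $\{u\}$}. The reason the hypothesis ``strong on $\{u\}$'' is built into the lemma is exactly to make this legitimate: in all three cases $u$ has even degree in $H-S$ (in the \nonincident\ cases no edge at $u$ is deleted; in the \fullyincident\ case exactly the two edges $e_1,e_2$ at $u$ are deleted), so by strongness half of the edges of $H-S$ at $u$ carry each label; flipping $f(u)$ therefore flips those edge labels in matched pairs, changing neither $m_0$ nor $m_1$ of $H-S$ and keeping strongness on $\{u\}$, while only moving one vertex from label $f(u)$ to label $f(u)+1$.

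With these preparations the statement should follow by a short case analysis on the type of $S$ and the parities of $n(H)$ and $m(H)$. For an $M_2$-\sprig\ I expect no perturbation to be needed: the choice $x_1=c_1+c_2$, $x_2=1+c_1+c_2$ makes $\{x_1,x_2\}=\zdwa$ and $\{g(e_1),g(e_2)\}=\zdwa$, adding exactly one vertex and one edge of each label. For an $M_1$-\sprig\ the natural choice $x_1=c_1,\,x_2=c_2+1$ makes $\{g(e_1),g(e_2)\}=\zdwa$, and it splits the two new vertex labels precisely when $c_1=c_2$; when $c_1\neq c_2$ it doubles one vertex label instead, which is still fine when $n(H)$ is odd (take the forced assignment that doubles the current minority label) or, in the \nonincident\ case, when $m(H)$ is odd (instead force $g(e_1)=g(e_2)$ at the minority edge label). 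The only remaining situation — an $M_1$-\sprig\ with $c_1\neq c_2$, $n(H)$ even, and either $m(H)$ even or $S$ \fullyincident — I would settle with the extra tool: flip $f(u)$ and take $x_1=x_2=f(u)$. Then the edge counts of $H-S$ stay balanced, the two new vertices labeled $f(u)$ exactly offset the displaced one, and $g(e_1)-g(e_2)=c_1-c_2\neq 0$ forces $\{g(e_1),g(e_2)\}=\zdwa$, which handles edge-balance and, if $S$ is \fullyincident, strongness on $\{u\}$.

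The hard part is precisely this last, worst-parity $M_1$ case: plainly extending $f$ can be forced to give the two new edges equal labels exactly when that breaks the edge balance, and the two new vertices alone cannot simultaneously sit at distinct labels and absorb the resulting surplus — only perturbing $f$, which is harmless thanks to strongness on $\{u\}$, resolves the conflict. Everything else is routine bookkeeping once the formulas for $g(e_1),g(e_2)$ and the edge-count invariance of the flip are in place; I expect the written proof to reduce to a compact table listing the choice of $(x_1,x_2)$ and of whether to flip $u$ in each case.
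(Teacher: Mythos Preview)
Your proposal is correct and follows essentially the same approach as the paper: extend $f$ to $v_1,v_2$, and use the flip of $f(u)$ to handle the bad $M_1$ subcase. The paper's proof is slightly leaner in that it applies the flip trick uniformly whenever $y_1\neq y_2$ (your $c_1\neq c_2$), avoiding your parity case analysis on $n(H)$ and $m(H)$: setting $x_1=x_2=a$ (the old $f(u)$) after the flip always restores vertex balance, since $n_{a+1}-n_a$ moves from $\{1,2,3\}$ back to $\{-1,0,1\}$ regardless of the parity of $n(H)$.
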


\begin{proof}
	Let $S=(e_1,e_2;v_1,v_2)$. Notice that $d_{H-S}(u)$ is even. Let $f$ be a $2$-cordial labeling of $H-S$ strong on $\{u\}$. For $i=1,2$ let $Y_i=e_i-\{v_1,v_2\}$ and $y_i=\sum\limits_{v\in Y_i}f(v)$. 
	\begin{description}
		\item[Case 1:] $S$ is an $M_1$-\sprig\ \nonincident\ or \fullyincident\ with $\{u\}$.
		
		If $y_1=y_2$ then we extend $f$ to a labeling of $H$ by defining $f(v_1)=0$ and $f(v_2)=1$. Then
		$f(e_1)=y_1+f(v_1)=y_1$ and $f(e_2)=y_2+f(v_2)=y_2+1$, so $f(e_1)\ne f(e_2)$ and $f$ is a $2$-cordial labeling strong on $\{u\}$.
		
		Suppose that $y_1\ne y_2$. Let $a=f(u)$. Let $f'$ be the labeling of $H-S$ obtained from $f$ by adding $1$ on $\{u\}$. Then $m_0(f')=m_0(f)$ and $m_1(f')=m_1(f)$, hence $|m_0(f')-m_1(f')|\le 1$.		
		Moreover $n_a(f')=n_a(f)-1$, $n_{a+1}(f')=n_{a+1}(f)+1$ and thus $n_{a+1}(f')-n_a(f')\in \{1,2,3\}$. We extend $f'$ to a labeling of $H$ by defining $f'(v_1)=a$ and $f'(v_2)=a$. Then 
		$$f'(e_1)=y_1+f'(v_1)=y_1+a,$$ 
		$$f'(e_2)=y_2+f'(v_2)=y_2+a$$ if $S$ is \nonincident\ with $\{u\}$ and
		$$f'(e_1)=y_1+1+f'(v_1)=y_1+1+a,$$ 
		$$f'(e_2)=y_2+1+f'(v_2)=y_2+1+a$$ if $S$ is \fullyincident\ with $\{u\}$.
		In both cases we have $f'(e_1)\ne f'(e_2)$, thus still $|m_0(f')-m_1(f')|\le 1$. Moreover now we have $n_{a+1}(f')-n_a(f')\in \{-1,0,1\}$.
		Therefore $f'$ is a $2$-cordial labeling strong on $\{u\}$.
		
		\item[Case 2:]  $S$ is an $M_2$-\sprig\ \nonincident\ with $\{u\}$.
		
		We extend $f$ to a $2$-cordial labeling of $H$ by defining $f(v_1)$ and $f(v_2)$. We have 
		$$f(e_1)=y_1+f(v_1), \qquad f(e_2)=y_2+f(v_1)+f(v_2).$$
		
		Either $(y_1,y_2)=(b,b)$ or $(y_1,y_2)=(b,b+1)$ for some $b\in\zdwa$. The values of $f(v_1)$ and $f(v_2)$ depending on $y_1$ and $y_2$ are given in Table~\ref{table_k2}.
		\begin{center}		
			\begin{longtable}{|c|c||c|c||c|c|}			
				\hline $y_1$ & $y_2$ & $f(v_1)$ & $f(v_2)$ & $f(e_1)$ & $f(e_2)$ \\  \hhline{|=|=#=|=#=|=|} b & b & 0 & 1 & b & b+1 \\ 
				\hline b & b+1 & 1 & 0 & b+1 & b\\ 
				\hline  
				\caption{}
				\label{table_k2}
			\end{longtable} 			
		\end{center}
		\vspace{-30pt}
		
		In both cases we have $f(e_1)\ne f(e_2)$, thus $f$ is a $2$-cordial labeling of $H$ strong on $\{u\}$.
		\qedhere
	\end{description}
\end{proof}

\begin{lem}\label{lem:k2conf}
	Every hypertree with an even number of edges has a vertex of even degree.
\end{lem}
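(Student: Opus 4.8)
The plan is to combine the edge‑counting formula of Proposition~\ref{liczba_krawedzi} with a one‑line parity argument. First I would dispose of the degenerate case: if $T$ has no edges, then, since a hypertree is connected, $T$ consists of a single vertex, which has degree $0$ and hence even degree; so the claim holds. From now on assume $T$ is non‑trivial.

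For the main case, I would invoke Proposition~\ref{liczba_krawedzi}, which gives
$$m(T)-1=\sum_{v\in V(T)}(d(v)-1).$$
Suppose, for contradiction, that every vertex of $T$ has odd degree. Then $d(v)-1$ is even for each $v$, so the right‑hand side is a sum of even numbers, hence even. On the other hand $m(T)$ is even by hypothesis, so $m(T)-1$ is odd — a contradiction. Therefore $T$ must contain a vertex of even degree.

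I do not expect any genuine obstacle here; the argument is essentially an immediate parity computation. The only point that requires a little care is that Proposition~\ref{liczba_krawedzi} is stated only for non‑trivial hypertrees, so the trivial hypertree (with $m(T)=0$, which is even) has to be handled separately, as in the first paragraph above.
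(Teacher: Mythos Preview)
Your argument is correct and is exactly the approach the paper intends: the paper's proof is the single line ``Follows from Proposition~\ref{liczba_krawedzi},'' and your parity computation is precisely how that implication is unpacked. The only addition you make is the explicit treatment of the trivial case, which is harmless and arguably more careful.
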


\begin{proof}
	Follows from Proposition~\ref{liczba_krawedzi}.
\end{proof}

\begin{thm}\label{tw:glowne2}
	Every hypertree is $2$-cordial.
\end{thm}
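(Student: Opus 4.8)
The plan is to reduce Theorem~\ref{tw:glowne2} to the following strengthening, which is the statement that survives the induction: \emph{for every hypertree $T$ with an even number of edges and every vertex $u$ of $T$ of even degree, $T$ admits a $2$-cordial labeling that is strong on $\{u\}$}. This strengthening is needed because Lemma~\ref{lem:k2red} both requires and produces a labeling strong on $\{u\}$ for a vertex $u$ of even degree. Once it is proved, Theorem~\ref{tw:glowne2} follows: if $m(T)$ is even then Lemma~\ref{lem:k2conf} supplies a vertex $u$ of even degree and the strengthening yields a $2$-cordial labeling of $T$; and if $m(T)$ is odd then $T$ is $2$-cordial by the theorem of Cichacz, Görlich and Tuza~\cite{CichaczGorlichTuza2013} that every hypertree with $n$ or $m$ odd is $2$-cordial.

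I would prove the strengthening by induction on $m=m(T)$. The base cases are $m=0$ (a single vertex labeled $0$, for which being strong on $\{u\}$ is vacuous) and $m=2$, where every two-edge hypertree consists of a pair of leaf-edges meeting in a single vertex $v$ of degree $2$, and a $2$-cordial labeling strong on $\{v\}$ (the unique vertex of even degree) is easy to exhibit directly. For the induction step ($m\ge 4$), the core is a structural lemma: \emph{a hypertree $T$ with $m\ge 2$ and a vertex $u$ of even degree contains a pendant sprig $S=(e_1,e_2;v_1,v_2)$ which is an $M_1$-sprig non-incident or fully-incident with $\{u\}$, or an $M_2$-sprig non-incident with $\{u\}$}. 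Granting it, $T\ominus S$ is a hypertree with $m-2$ (still even) edges, and $u$ has even degree in $T-S$ --- its degree is unchanged when $S$ is non-incident with $\{u\}$ and drops by $2$ when $S$ is fully-incident with $\{u\}$, and $u\notin\{v_1,v_2\}$ in either case. If $u$ is not isolated in $T-S$, then $u\in V(T\ominus S)$ has even degree there, so the induction hypothesis gives a $2$-cordial labeling of $T\ominus S$ strong on $\{u\}$; Proposition~\ref{prop:seedy} extends it to a labeling of $T-S$ strong on $\{u\}$; and Lemma~\ref{lem:k2red} promotes it to a $2$-cordial labeling of $T$ strong on $\{u\}$. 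If $u$ is isolated in $T-S$ --- which, since $d_T(u)$ is positive and even, forces $d_T(u)=2$ and $S$ to be fully-incident with $\{u\}$ --- then being strong on $\{u\}$ holds automatically for any labeling of $T-S$, so a $2$-cordial labeling of $T\ominus S$ from the induction hypothesis, extended via Proposition~\ref{prop:seedy} to $T-S$, is strong on $\{u\}$, and Lemma~\ref{lem:k2red} again finishes the step.

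So the real content, and the main obstacle, is the structural lemma. The strategy is to examine the leaf-edges of $T$ and their position relative to $u$. In the generic case one finds two leaf-edges avoiding $u$ whose removal leaves $T$ connected, and their leaves form an $M_1$-sprig non-incident with $\{u\}$. In degenerate cases --- when $T$ is close to a hyperpath, or $u$ meets nearly all leaf-edges --- one instead pairs a leaf-edge with an adjacent edge along a degree-$2$ vertex to obtain an $M_2$-sprig non-incident with $\{u\}$; and when $u$ itself lies in two leaf-edges one uses an $M_1$-sprig fully-incident with $\{u\}$ (falling back to the $m=2$ base case when $d_T(u)=2$). The difficulty is to organize these choices uniformly so that in every case the two selected edges are correctly positioned with respect to $u$ \emph{and} their deletion leaves a connected hypertree; this forces a fairly involved case analysis distinguishing whether $u$ lies in a leaf-edge, whether $T$ is a hyperstar or a hyperpath, and small values of $m$. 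I expect that bookkeeping, rather than any single hard idea, to be the bulk of the work.
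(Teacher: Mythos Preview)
Your plan is correct and follows the same architecture as the paper: prove the strengthened statement (existence of a $2$-cordial labeling strong on $\{u\}$ for any even-degree vertex $u$) by induction on $m$ when $m$ is even, using Lemma~\ref{lem:k2red} for the step, and then deduce the odd-$m$ case from the even one.

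Two points where the paper's execution is cleaner than what you sketch. First, for $m$ odd the paper does not invoke the Cichacz--G\"orlich--Tuza theorem; it simply deletes a leaf-edge $e$, applies the even case to $T-e$, and observes that any $2$-cordial labeling of $T-e$ is automatically $2$-cordial for $T$ since a single extra edge cannot violate the balance condition. This keeps the proof self-contained.

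Second, the ``structural lemma'' you anticipate as the bulk of the work collapses to a one-line dichotomy in the paper. There is no need for a separate base case $m=2$, nor for a case analysis over hyperpaths, hyperstars, and the position of $u$ among leaf-edges. The paper just asks whether $m>d(u)$. If so, there are at least two edges not containing $u$ whose removal leaves at most one non-trivial component, and these (together with one leaf from each) form a pendant $M_1$- or $M_2$-sprig non-incident with $\{u\}$. If $m=d(u)$, then every edge contains $u$, so $T$ is a hyperstar centred at $u$; take any two of its edges with one leaf each to get a pendant $M_1$-sprig fully-incident with $\{u\}$. Either way $d_{T\ominus S}(u)$ stays even (or $T\ominus S$ is empty), and the induction goes through directly via Proposition~\ref{prop:seedy} and Lemma~\ref{lem:k2red}. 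Your careful treatment of the ``$u$ isolated in $T-S$'' situation is fine but is absorbed into this second branch without further fuss.
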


\begin{proof} Let $T$ be a hypertree with $m=m(T)$. We divide the proof into two cases.
	\begin{description} 
		\item[Case 1:] $m=_20$
		
		By Lemma~\ref{lem:k2conf} every hypertree $T$ with an even number of edges has a vertex of even degree. We prove a stronger statement: If $T$ is a hypertree with an even number of edges and $u$ is a vertex of even degree in $T$, then there exists a $2$-cordial labeling of $T$ strong on $\{u\}$.

		The proof is by induction on $m$. 
		For $m=0$ the assertion obviously holds. Let $T$ be a hypertree with $m>0$ edges, $m=_20$, and let $u$ be a vertex of even degree in $T$. We will find a \sprigowy\ \sprig\ $S$, which satisfies the assumptions of Lemma~\ref{lem:k2red} and will be used in the induction step.
		
		If $m>d(u)$, then there exists a set $F$ containing two edges such that $T-F$ has at most one non-trivial component. Clearly, we can choose such two edges $e_1,e_2$ and vertices $v_i\in e_i$ for $i=1,2$ in a~way that they can be arranged into a \sprigowy\ $M_1$-\sprig\ or $M_2$-\sprig\ $S$ \nonincident\ with $\{u\}$. Notice that $d_{T\ominus S}(u)=_2 0$. Otherwise $T$ is a hyperstar with the central vertex $u$. We take as $S$ a \sprigowy\ $M_1$-\sprig\ \fullyincident\ with $\{u\}$ consisting of two edges incident with $u$ and one leaf from each of these edges. Observe that either $T\ominus S$ is the empty hypergraph or $d_{T\ominus S}(u)=_2 0$.
		
		In each case we have found a \sprigowy\ \sprig\ $S$ such that (by induction hypothesis and Proposition~\ref{prop:seedy}) $T$, $u$ and $S$ satisfy the assumptions of Lemma~\ref{lem:k2red}. Therefore, by Lemma~\ref{lem:k2red}, $T$ has a~$2$-cordial labeling strong on~$\{u\}$.

		\item[Case 2:] $m=_21$
		
		Let $e$ be a \leafedge\ in $T$. By Case~1 and  Proposition~\ref{prop:seedy}, $T-e$ has a $2$-cordial labeling $f$. Clearly, $f$ is also a $2$-cordial labeling of $T$, regardless of the induced value of $f(e)$.
		\qedhere
	\end{description}
\end{proof}

\section{3-cordiality of hypertrees}\label{sec:k3}

The case of $k=3$ needs more careful analysis than $k=2$. In this section, we extend our notation. In~the~previous section,
we used a vertex of even degree to help us to extend the labeling. 
For the~same purpose, for $k=3$ we will use two different structures. 

Let $T=(V,E)$ be a hypertree.

\begin{df}
	A set $\{u\}\subset V$ is a \emph{\dobrysingleton} if $d(u)=_3 0$.
\end{df}

\begin{df}\label{def:helpful_set}
	A set $\{u_1,u_2\}\subset V$ is a \emph{\dwadobryzbior} if $u_1$ and $u_2$ are non-adjacent, $d(u_2)=_3 2$ and $u_1$ is a leaf.
\end{df}

Notice that if $\{u_1,u_2\}$ is a \dwadobryzbior\ then $d(u_1)+d(u_2)=_3 0$. 

Sometimes we need to remove a \dwadobryzbior\ $A$ from $H$ with a \sprig\ \containing\ $A$. In order to be able to proceed by induction, we need the hypergraph obtained by removing this sprig to be a~hypertree. Before removing $A$ we have to remove some "pendant" \sprigs. Hence we introduce the~following definition.

\begin{df}\label{def:helpful_conf}
	Let $A=\{u_1,u_2\}$ be a \dwadobryzbior, where $d(u_2)=_3 2$. Denote by $P_T(A)\subset E$ the~set of edges which belong to those components of $T-u_2$ which do not contain $u_1$. We say that $A$ is a~\emph{\dwadobrakonfiguracja} if $|P_T(A)|=_3 0$.
\end{df}

Notice that if $A$ is a \dwadobrakonfiguracja\ then $(T-P_T(A))-A$ has at most one non-trivial component.

\begin{df}
	A set $A\subset V$ is called a~\emph{\dobrakonfiguracja} if it is either a~\dobrysingleton\ or a~\dwadobrakonfiguracja.
\end{df}

In order to compress the proofs in this section, we will use some 
matrix notation.
Sequences of elements from $\mathbb{Z}_3$ will be treated sometimes purely as sequences (and then denoted with round brackets) and sometimes as elements of $M_3^1(\mathbb{Z}_3)$ 
(and then denoted with square brackets). The following definitions will be used for compressing the
proofs in this section.

Let $\mathcal{P}$ and $\mathcal{D}$ be the set of all vectors from $M_3^1(\mathbb{Z}_3)$ containing exactly $3$ and $2$ distinct coordinates, respectively. 


\begin{df}
	Let $M\in M_3^3(\mathbb{Z}_3)$ and $y\in M_3^1(\mathbb{Z}_3)$. We say that $x\in \mathcal{P}$ is a~\emph{simple $M$-solution} for $y$ if $y+M x\in\mathcal{P}$.
\end{df}

\begin{df}
	Let $M\in M_3^3(\mathbb{Z}_3)$ and $y\in M_3^1(\mathbb{Z}_3)$. Consider $\mathcal{X}\subset \mathcal{D}$ and the~following conditions:
	\begin{enumerate}
		\item $|\mathcal{X}|=3$,
		\item every $x\in\mathcal{X}$ satisfies $y+M x\in\mathcal{P}$,
		\item for every $a\in\mathbb{Z}_3$ there exists $x\in\mathcal{X}$ with two coordinates equal to $a$,
		\item for every $a\in\mathbb{Z}_3$ there exists $x\in\mathcal{X}$ with no coordinate equal to $a$.
		
	\end{enumerate}
	We say that $\mathcal{X}$ is: a \emph{\jedencomposed\ $M$-solution} for $y$ if it satisfies conditions 1, 2, 3; a \emph{\dwacomposed\ $M$-solution} for $y$ if it satisfies conditions 1, 2, 4; a \emph{composed $M$-solution} for $y$ if it satisfies conditions 1, 2, 3, 4.
\end{df}

The following lemma transforms the problem of extending a partial labeling to a $3$-cordial labeling to a~problem of finding a certain $M$-solution.


\begin{lem}\label{lem:solut}
	Let $H$ be a hypergraph, $S=(e_1,e_2,e_3;v_1,v_2,v_3)$ an $M$-\sprig\ in $H$ for some $M\in M_3^3(\mathbb{Z}_3)$ and $A$ a \dobrakonfiguracja\ in $H$. Assume there exists a $3$-cordial labeling $f$ of $H-S$. For $j=1,2,3$ denote $Y_j=e_j-\{v_1,v_2, v_3\}$, $y_j=\sum\limits_{v\in Y_j}f(v)$ and $y=[y_1,y_2,y_3]^T$. Assume that one of the following conditions is satisfied:
	\begin{enumerate}
		\item $S$ is \nonincident\ or \fullyincident\ with $A$, $f$ is strong on $A$ and $x$ is a simple $M$-solution for $y$,
		\item $S$ is \containing\ $A$ and $x$ is a simple $M$-solution for $y$,
		\item $A$ is a \dobrysingleton, $S$ is \nonincident\ or \fullyincident\ with $A$, $f$ is strong on $A$ and $\mathcal{X}$ is a \jedencomposed\ $M$-solution for $y$.
		\item $A$ is a \dwadobrakonfiguracja, $S$ is \nonincident\ or \fullyincident\ with $A$, $f$ is strong on $A$ and $\mathcal{X}$ is a \dwacomposed\ $M$-solution for $y$.
	\end{enumerate}
	Then there exists a $3$-cordial labeling of $H$ strong on $A$.
\end{lem}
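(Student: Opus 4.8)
The plan is to treat all four cases uniformly, reducing each to: extend the labeling $f$ of $H-S$ to all of $H$ by choosing values $x = [f(v_1), f(v_2), f(v_3)]^T$ for the sprig vertices, and then check that the resulting global labeling is $3$-cordial and strong on $A$. First I would record the bookkeeping. Since $v_1,v_2,v_3$ are isolated in $H - \{e_1,e_2,e_3\}$, assigning them labels $x_1,x_2,x_3$ changes only $n_a$ for $a \in \{x_1,x_2,x_3\}$ and only the edge labels $f(e_1),f(e_2),f(e_3)$; all other edges keep the labels they had under $f$. The induced edge labels are exactly the coordinates of $y + Mx$: indeed $f(e_j) = y_j + \sum_{i : v_i \in e_j} x_i = y_j + (Mx)_j$ by definition of the adjacency matrix $M(S)=M$ (with the caveat, handled below, that in the \fullyincident\ case the labels of $u_1,u_2$ may have shifted). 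Because $H-S$ was $3$-cordial and has $m(H)-3$ edges and $n(H)-3$ vertices, after adding three new vertices with three \emph{distinct} labels (which $\mathcal P$, and each $x\in\mathcal D\cup\mathcal P$ used, guarantees for the vertex side once we argue it) and three new edge labels forming a vector in $\mathcal P$, the vertex and edge counts each go up by one in every residue class, so $3$-cordiality is preserved. Hence in every case the single requirement on the edge side is $y + Mx \in \mathcal P$, which is precisely condition~2 in the definition of a composed solution and the defining property of a simple solution.

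Next I would dispatch the "strong on $A$" requirement, which is where the four cases genuinely differ. In cases~1 and~2 we want $x \in \mathcal P$, i.e. $f(v_1),f(v_2),f(v_3)$ all distinct; combined with $f$ being strong on $A$ (case~1) or $A\subseteq\{v_1,v_2,v_3\}$ being given three distinct labels (case~2), the new labeling is strong on $A$. In case~2, when $A$ is \containing, one must also note that since $S$ is \containing\ $A$, the vertices of $A$ are among the $v_i$, so "strong on $A$" for the vertex part is automatic from $x\in\mathcal P$, and the edge-balance part of strongness follows because $A$'s incident edges are a subset of $\{e_1,e_2,e_3\}$ — here I would use the structure of a \dobrakonfiguracja\ to see that removing the pendant sprigs leaves $A$ with controlled degree. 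In cases~3 and~4, $x$ ranges over $\mathcal D$, so each chosen sprig may repeat a label; the point of conditions~3 and~4 is that we are free to \emph{choose which} $x\in\mathcal X$ to use after seeing $f$, and this freedom is exactly what lets us later repair the strong-on-$A$ condition by adding a constant on the relevant pendant part or leaf. Concretely: for a \dobrysingleton\ $A=\{u\}$ with $d(u)=_3 0$, condition~3 lets us pick $x$ avoiding any prescribed residue, so that adding that residue on a suitable set through $u$ rebalances without disturbing edge labels (because $d(u)\equiv 0$); for a \dwadobrakonfiguracja, condition~4 supplies an $x$ whose two-coordinate value we can steer, matching the $d(u_1)+d(u_2)=_3 0$ congruence noted after Definition~\ref{def:helpful_set}.

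The main obstacle, and where I would spend the most care, is the interaction between the \fullyincident\ case and the strong-on-$A$ bookkeeping. When $S$ is \fullyincident\ with $A$, each $e_j$ contains a vertex of $A$, so if we modify the labels on $A$ (which we may need to do to keep strongness while also controlling $y+Mx$), the values $y_j$ themselves shift, and one has to verify the shift is by a constant across all three coordinates — or account for it exactly — so that $y+Mx\in\mathcal P$ is preserved under the modification. I would handle this by showing the shift adds the same element of $\mathbb Z_3$ to every $y_j$ (since $A$ is hit by every edge of $S$ and, in the $2$-set case, the relevant degrees are $\equiv 0 \pmod 3$), hence permutes $\mathcal P$ and $\mathcal D$; the solution hypothesis then transfers. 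A secondary technical point is confirming that $H-S$ is still a hypergraph to which the hypothesis "there exists a $3$-cordial labeling $f$" legitimately applies and that no edge of $H-S$ accidentally coincides with or is contained in an $e_j$ — but linearity of hypertrees and the isolation condition in the definition of a sprig rule this out. Once these are in place, each of the four cases is a short verification against the three cordiality inequalities and the strongness definition.
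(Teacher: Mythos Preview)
Your framework for cases~1 and~2 is correct and matches the paper: with $x\in\mathcal P$ and $y+Mx\in\mathcal P$, each vertex and each edge count goes up by exactly one, so $3$-cordiality is preserved; strongness on $A$ is then immediate (in case~2 because the edges incident to $A$ are precisely $e_1,e_2,e_3$, which receive three distinct labels).

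Cases~3 and~4, however, contain a real gap. You have swapped the roles of conditions~3 and~4 in the definition of a composed solution. Condition~3 (used for a helpful $1$-configuration $A=\{u\}$) says that for every $a$ there is an $x\in\mathcal X$ with \emph{two} coordinates equal to $a$; condition~4 (used for a helpful $2$-configuration) says that for every $a$ there is an $x\in\mathcal X$ with \emph{no} coordinate equal to $a$. The paper's mechanism is the reverse of what you describe: in case~3 one takes $a=f(u)$, picks $x\in\mathcal X$ with two coordinates equal to $a$ and missing value $b$, and \emph{relabels $u$ itself} by adding $b-a$ on $A$. Then the three sprig vertices carry labels $a,a,c$ (where $c$ is the third element of $\mathbb Z_3$), and the net change in vertex counts is $+1$ in every class. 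In case~4 one lets $a$ be the label absent from $\{f(u_1),f(u_2)\}$, picks $x\in\mathcal X$ avoiding $a$ (so $x$ has labels $b,b,c$), and adds $a-c$ on $A$; the identity $b+(a-c)=c$ makes the bookkeeping close. In both cases the modification is on $A$ only, not on any ``pendant part'' or auxiliary set.

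A second point: the reason the relabeling on $A$ does not disturb the edge balance in $H-S$ is not the congruence $d(u)\equiv 0$ per se, but the hypothesis that $f$ is \emph{strong on $A$}: the edges of $H-S$ incident to $A$ are already equidistributed over $\mathbb Z_3$, so a uniform shift of the labels on $A$ permutes their labels cyclically and leaves every $m_p$ unchanged. This is also why strongness on $A$ survives to $f'$. Your argument for the \fullyincident\ shift being constant across the three $y_j$ is essentially right (each $e_j$ meets exactly one vertex of $A$, and all vertices of $A$ shift by the same amount), but the justification you give via degree congruences is not the operative one.
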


\begin{proof}
	First assume that $x=[x_1,x_2,x_3]^T$ is a simple $M$-solution for $y$. Let $z=[z_1,z_2,z_3]^T=y+Mx$. We extend $f$ to the labeling $f'$ of $H$ by defining $f'(v_i)=x_i$ for $i=1,2,3$. Then $f'(e_i)=z_i$ for $i=1,2,3$. Since both $x$ and $z$ are in $\mathcal{P}$, $f'$ is a $3$-cordial labeling of $H$. Clearly, if $S$ is \containing\ $A$, then $f'$ is strong on $A$. Moreover, if $f$ is strong on $A$ and $S$ is \nonincident\ or \fullyincident\ with $A$, then also $f'$ is strong on $A$. 
	
	Consider the case when $A=\{u\}$ is a \dobrysingleton\ and $\mathcal{X}$ is a \jedencomposed\ $M$-solution for $y$. Let $a=f(u)$. Choose $x=[x_1,x_2,x_3]^T\in\mathcal{X}$ with two coordinates equal to $a$ and let $b\in\mathbb{Z}_3$ be the~number missing in $x$. 
	Let $z=[z_1,z_2,z_3]^T=y+Mx$.
	Let $f'$ be the labeling obtained from~$f$ by adding $b-a$ on $A$. We extend $f'$ to the labeling of $H$ by defining $f'(v_i)=x_i$ for $i=1,2,3$. Then $f'(e_i)=z_i$ if $S$ is \nonincident\ with $A$ and  $f'(e_i)=z_i+b-a$ if $S$ is \fullyincident\ with $A$ for~$i=1,2,3$.  By the choice of $x$ we have $|n_p(f')-n_q(f')|\le1$ for all $p,q\in\mathbb{Z}_3$. Since $z\in\mathcal{P}$ then also  $|m_p(f')-m_q(f')|\le1$ for all $p,q\in\mathbb{Z}_3$. Hence $f'$ is a $3$-cordial labeling of $H$.  Moreover, since $f$ is strong on $A$ and $S$ is \nonincident\ or \fullyincident\ with $A$, we get that $f'$ is also strong on $A$.

	Now consider the case when $A=\{u_1,u_{2}\}$ is a \dwadobrakonfiguracja\ and $\mathcal{X}$ is a \dwacomposed\ \linebreak $M$-solution for $y$. 
	Let  $a$ be the element of $\mathbb{Z}_3$ different from $f(u_1)$ and $f(u_2)$. Choose $x=[x_1,x_2,x_3]^T \in \mathcal{X}$ with all coordinates distinct from $a$ and let $b\in\mathbb{Z}_3$ be the number occurring on two coordinates of~$x$.
	Moreover, let $c$ be the element of $\mathbb{Z}_3$ different from $a$ and $b$. Let $f'$ be the labeling obtained from~$f$ by adding $a-c$ on $A$. Note that $b+a-c = c$. The~rest of the proof goes as in the previous case.
\end{proof}


The plan of the rest of this section is the same as of Section~\ref{sec:k2}. The main result is Theorem~\ref{tw:glowne3}, which states that all hypertrees are $3$-cordial. Again, the most involved part is the case $m(T) =_3 0$.
The proof goes similarly to the~case of $m(T)=_2 0$ in Theorem~\ref{tw:glowne2}.
The difference is that here we use helpful configurations instead of even degree vertices.

In this section, we will use $M_i$-\sprigs\ for $i=1,\ldots,4$, where:
\vspace{2ex}

$M_1=\begin{bmatrix}
1 & 0 & 0\\
0 & 1 & 0\\
0 & 0 & 1
\end{bmatrix}$,
$M_2=\begin{bmatrix}
1 & 0 & 0\\
0 & 1 & 0\\
0 & 1 & 1
\end{bmatrix}$,
$M_3=\begin{bmatrix}
1 & 0 & 0\\
1 & 1 & 0\\
1 & 0 & 1
\end{bmatrix}$,
$M_4=\begin{bmatrix}
1 & 0 & 0\\
1 & 1 & 0\\
0 & 1 & 1
\end{bmatrix}$.

\vspace{2ex}
\noindent For an illustration, see Figure~\ref{fig:2}.

%

\begin{figure}[H]		
	\centering
	\hfil
	\subfigure[$M_1$-sprig]{\includegraphics[scale=0.17]{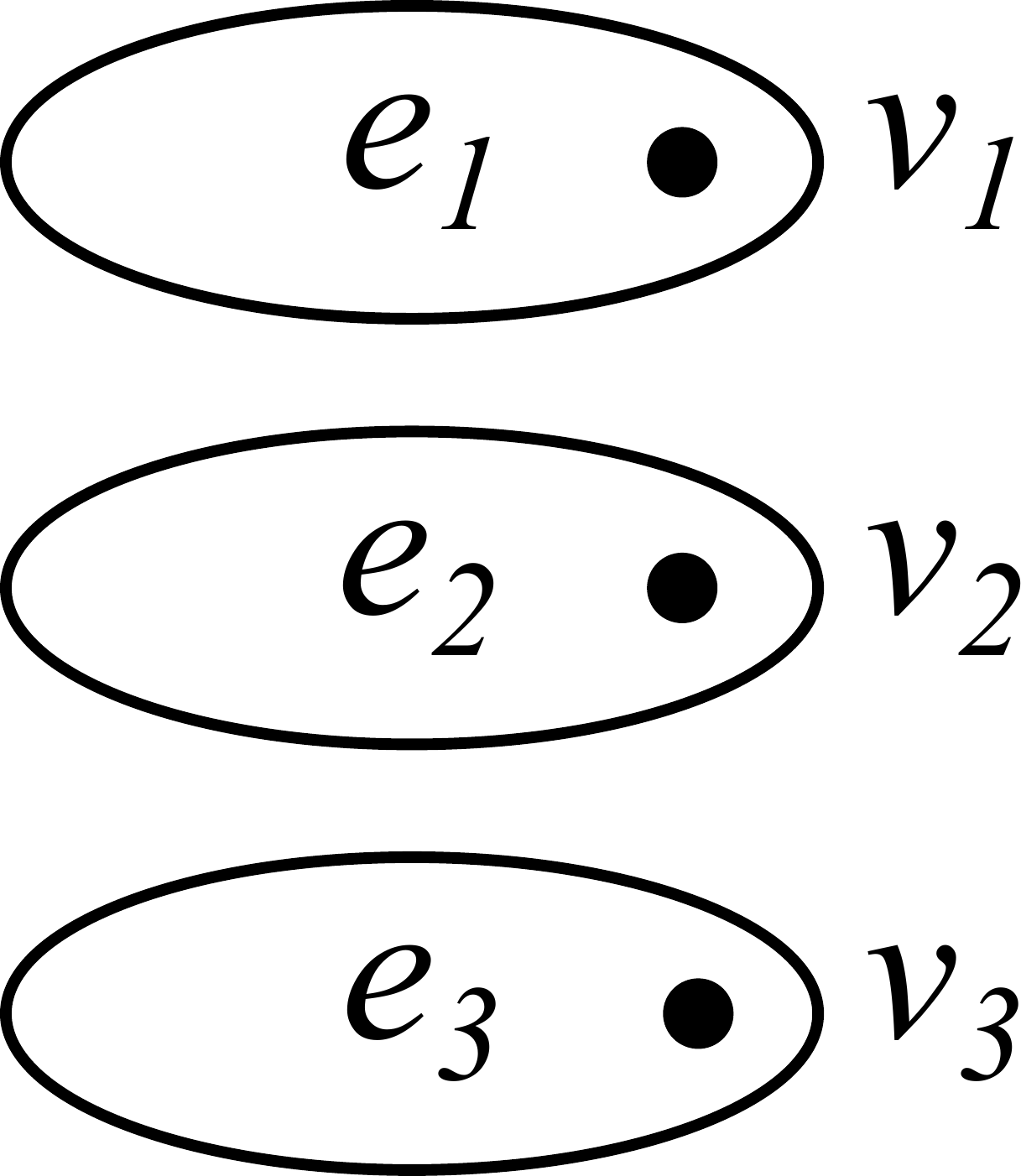}} \hfill
	\subfigure[$M_2$-sprig]{\includegraphics[scale=0.17]{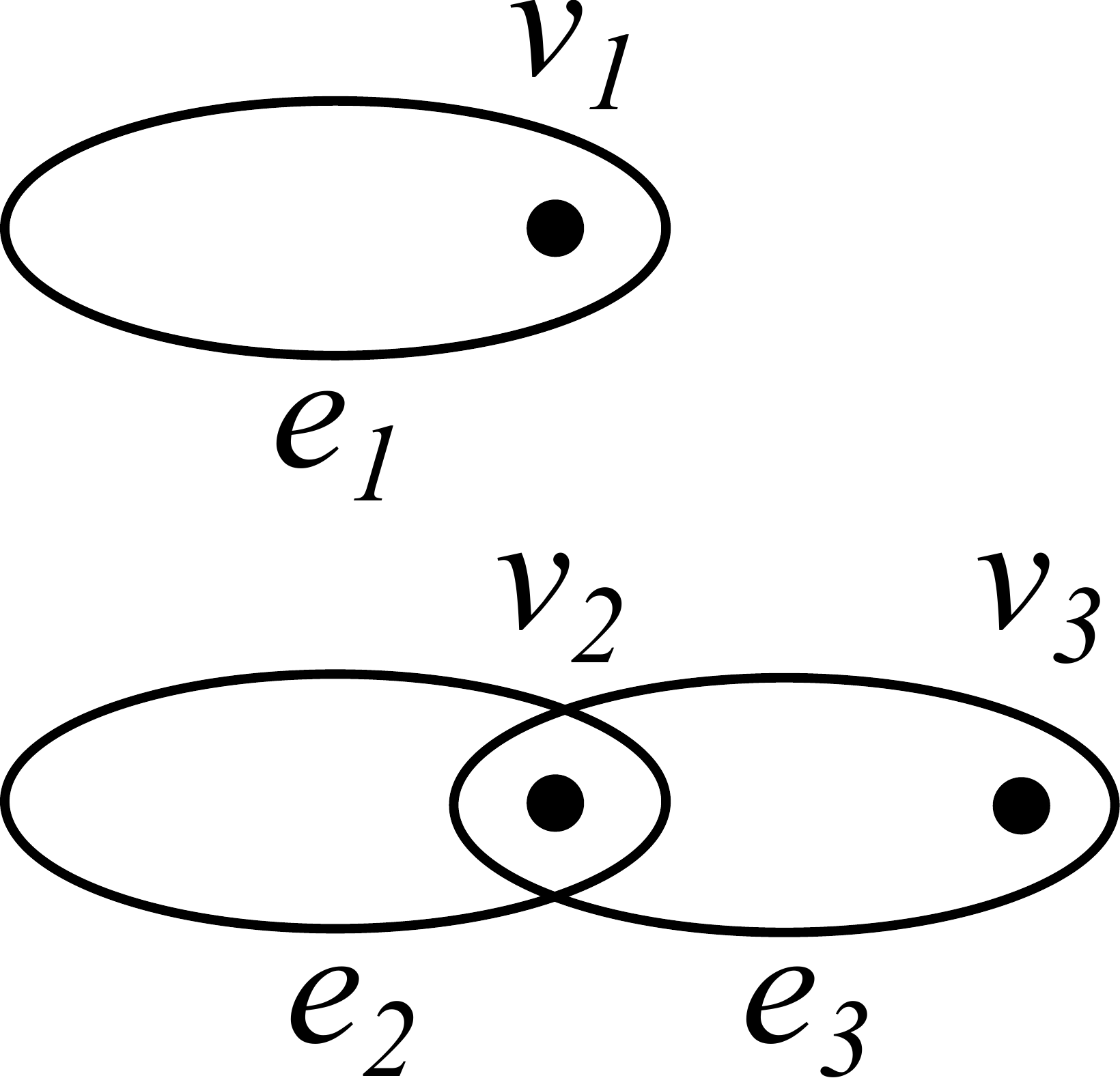}} \hfill
	\subfigure[$M_3$-sprig]{\includegraphics[scale=0.17]{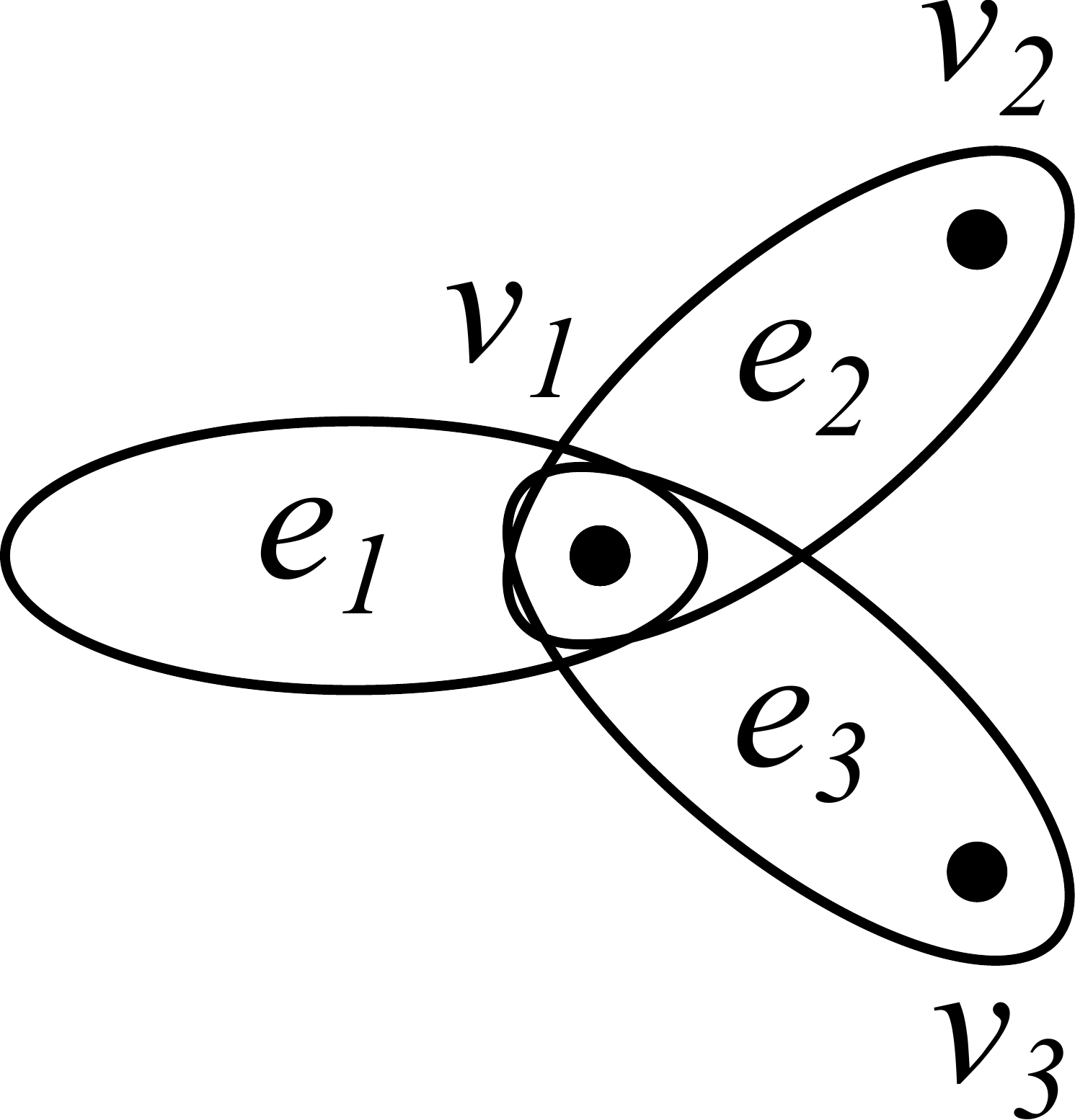}} \hfill
	\subfigure[$M_4$-sprig]{\includegraphics[scale=0.17]{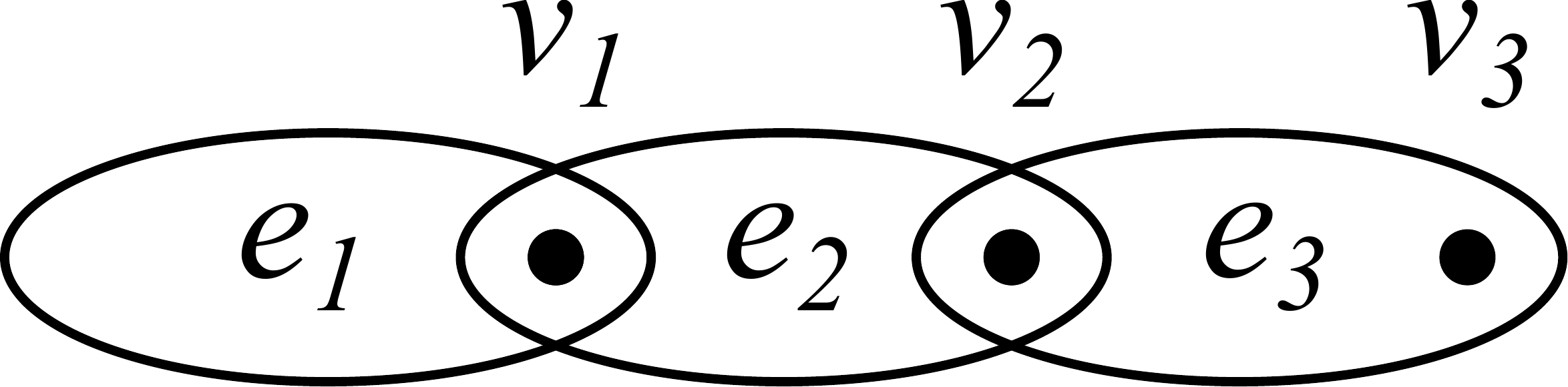}} \hfil
	\caption{Types of sprigs}\label{fig:2}		
\end{figure}

\begin{lem}\label{lem:k3red}
	Let $H$ be a hypergraph, $A$ a \dobrakonfiguracja\ in $H$ and $S$ be
	\begin{enumerate}\itemsep0em 
		\item an $M_1$-\sprig\ \nonincident\ or \fullyincident\ with $A$,
		\item an $M_2$-\sprig\ \nonincident\ with or \containing\ $A$,
		\item an $M$-\sprig\ \nonincident\ with $A$ for $M\in\{M_3,M_4\}$.
	\end{enumerate} 
	
	Assume  $H-S$ has a $3$-cordial labeling $f$. Moreover, if $S$ is \nonincident\ or \fullyincident\ with $A$, then assume $f$ is strong on $A$. 
	
	Then there exists a $3$-cordial labeling of $H$ strong on $A$.
\end{lem}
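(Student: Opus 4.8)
The plan is to reduce everything to Lemma~\ref{lem:solut}: for each of the listed matrices $M\in\{M_1,M_2,M_3,M_4\}$ and each configuration of $S$ relative to $A$, I will exhibit, for every possible value of $y=[y_1,y_2,y_3]^T\in M_3^1(\mathbb{Z}_3)$, an appropriate $M$-solution (simple, \jedencomposed, \dwacomposed, or composed as needed), and then invoke the matching item of Lemma~\ref{lem:solut}. The global structure of the proof is therefore: first dispose of the case when $A$ is a \dobrysingleton\ and the case when $A$ is a \dwadobrakonfiguracja\ by noting which items of Lemma~\ref{lem:solut} apply; then handle each matrix $M_i$ separately, in each case producing the required solutions by an analysis of $y$.

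First I would record the elementary observation that for all four matrices $M_i$ the map $x\mapsto Mx$ is invertible over $\mathbb{Z}_3$ (each $M_i$ is lower-triangular with $1$'s on the diagonal). Consequently $x\mapsto y+Mx$ is a bijection of $M_3^1(\mathbb{Z}_3)$ for every $y$. Since $|\mathcal{P}|=6$ and $|\mathcal{D}|=18$ (the nine constant vectors are the only ones not in $\mathcal{P}\cup\mathcal{D}$), a counting argument shows a simple $M$-solution need not always exist, but the preimage of $\mathcal{P}$ under $x\mapsto y+M_ix$ always has size $6$; I would then argue that when $y$ itself has a repeated coordinate this preimage meets $\mathcal{P}$, giving a simple solution, and that in the remaining cases (when $y\in\mathcal{P}$, or $y$ constant) the preimage of $\mathcal{P}$ consists of vectors with repeated coordinates, i.e.\ lies in $\mathcal{D}$, out of which one can select a three-element set $\mathcal{X}$ satisfying conditions 3 and 4 of the composed-solution definition. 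This is the computational heart of the lemma. Because $M_1$ is the identity, here $y+M_1x = y+x$ and the analysis is cleanest; for $M_2,M_3,M_4$ one carries out the same analysis after the linear substitution $x'=M_ix$, so effectively one only needs to understand which $y$ force $y+x'\in\mathcal{P}$ for $x'$ ranging over a translate of $\mathcal{P}$, and then pull the resulting $x'$ back through $M_i^{-1}$, checking that the pulled-back vectors still have the coordinate-multiplicity pattern required by conditions 3 and 4. The pull-back step is where $M_2,M_3,M_4$ genuinely differ from $M_1$ and from each other, and it must be verified matrix-by-matrix; the reason the statement restricts $M_3,M_4$-sprigs to be \nonincident\ with $A$ (whereas $M_2$ is allowed to be \containing\ $A$ and $M_1$ to be \fullyincident) is precisely that only for those combinations do the available solutions match an item of Lemma~\ref{lem:solut}, so I would point this out rather than attempting the excluded cases.

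The main obstacle I anticipate is the bookkeeping in the last two cases: showing that for $M_3$ and $M_4$ and \emph{every} $y$ one gets either a simple solution or a composed solution with the correct coordinate-multiplicity profile, since here $M_i^{-1}$ is not a permutation matrix and can destroy the "two equal coordinates" or "one missing value" structure of a vector in $\mathcal{D}$. I would organize this by splitting on the coordinate pattern of $y$ (three distinct entries; exactly two equal; all equal) and, in each subcase, listing the full $6$-element preimage of $\mathcal{P}$ explicitly and reading off the needed $\mathcal{X}$; by symmetry under adding a constant to all coordinates (which commutes with $x\mapsto M_ix$) it suffices to treat one representative $y$ in each pattern, which keeps the casework short. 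Once all solutions are produced, the conclusion "there exists a $3$-cordial labeling of $H$ strong on $A$" follows immediately from the appropriate clause of Lemma~\ref{lem:solut}.
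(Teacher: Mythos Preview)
Your overall strategy---reduce to Lemma~\ref{lem:solut} and exhibit the required $M$-solutions case by case---is exactly what the paper does. But the structural claim you intend to ``argue'' is backwards, and this is not a cosmetic slip: it misidentifies where the real work lies.

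You assert that when $y$ has a repeated coordinate the preimage of $\mathcal{P}$ under $x\mapsto y+M_1x=y+x$ meets $\mathcal{P}$, yielding a simple solution, while for $y\in\mathcal{P}$ or $y$ constant one must fall back on a composed solution. The opposite is true. Take $y=[0,0,1]^T\in\mathcal{D}$: the six vectors $p-y$ with $p\in\mathcal{P}$ are
\[
[0,1,1],\ [0,2,0],\ [1,0,1],\ [1,2,2],\ [2,0,0],\ [2,1,2],
\]
all in $\mathcal{D}$, so \emph{no} simple $M_1$-solution exists. Conversely, for $y=[0,0,0]^T$ or $y=[0,1,2]^T$ the vector $x=[0,1,2]^T$ is a simple solution. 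Thus it is precisely the $y\in\mathcal{D}$ cases that require composed $M_1$-solutions, and---contrary to your expectation---the delicate matrix is $M_1$, not $M_3$ or $M_4$: for each of $M_2,M_3,M_4$ a simple solution exists for \emph{every} $y$, which the paper verifies in three-line tables. Your worry about whether pull-back through $M_3^{-1},M_4^{-1}$ preserves the coordinate-multiplicity pattern of a $\mathcal{D}$-vector never arises, because no composed solutions are needed there.

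A second, smaller issue: the symmetry you invoke (``adding a constant to all coordinates, which commutes with $x\mapsto M_ix$'') holds only for $M_1$, since $M_i\mathbf{1}\neq\mathbf{1}$ for $i\ge 2$. The paper uses the two-parameter reduction $y\mapsto y-a\mathbf{1}-bM_i\mathbf{1}$ together with $x\mapsto x+b\mathbf{1}$; for $i\ge 2$ the vectors $\mathbf{1}$ and $M_i\mathbf{1}$ are independent, so this normalizes $y$ to the form $[0,y_2,0]^T$ and leaves only three cases per matrix. Your one-parameter version would still succeed in principle, but with roughly three times the casework.
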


\begin{proof} 
	Let $S=(e_1,e_2,e_3;v_1,v_2,v_3)$. For $j=1,2,3$ let $Y_j=e_j-\{v_1,v_2,v_3\}$ and $y_j=\sum\limits_{v\in Y_j}f(v)$.

	By Lemma~\ref{lem:solut}, to extend the $3$-cordial labeling of $H-S$ into a $3$-cordial labeling of $H$ it suffices to find a suitable simple or composed $M_i$-solution for $y=[y_1,y_2,y_3]^T$.
	
	Let $\textbf{1}$ denote $[1,1,1]^T$. Observe that   $z\in\mathcal{P}\Leftrightarrow z+a\cdot\textbf{1}\in\mathcal{P}$ and $z\in\mathcal{D}\Leftrightarrow z+a\cdot\textbf{1}\in\mathcal{D}$ for every $a\in\mathbb{Z}_3$. Notice also that $y-b\cdot M_i\cdot\textbf{1}+M_i\cdot(x+b\cdot\textbf{1})=y+M_i\cdot x$ for every $b\in\mathbb{Z}_3$. Therefore we claim that:
	
	\myquote{
		There exists a simple (composed) $M_i$-solution for $y$ if and only if there exists a simple (composed) $M_i$-solution for $y'=y- a\cdot\textbf{1}-b\cdot M_i\cdot\textbf{1}$ for any $a,b\in\mathbb{Z}_3$.
	}

	Consider $i\in\{2,3,4\}$. Then $M_i\cdot\textbf{1}=[1,c,2]^T$ for some $c\in\{1,2\}$ depending on $i$.  By $(\ast)$ it is sufficient to find a simple or composed $M_i$-solution only for all $y'$ of the form $[0,y_2,0]$.	
	For $i=1$ we have $M_1\cdot\textbf{1}=\textbf{1}$. Hence by $(\ast)$ it is sufficient to find a simple or composed $M_1$-solution only for all $y'$ of the~form $[0,y_2,y_3]^T$. We denote $x=[x_1,x_2, x_3]^T$ and $z=y+M_i x=[z_1,z_2,z_3]^T$.
	
	\begin{description}
		\item[Case 1:] We present $M_1$-solutions for $y=[0,y_2,y_3]^T$. By symmetry we can assume that $y_2\le y_3$. Here are the corresponding formulas for $z$.	
		$$z_1=0+x_1, \qquad z_2=y_2+x_2, \qquad z_3=y_3+x_3$$
		The solutions are given in Table~\ref{table_k3_1}.		
		For each pair of values of $y_2$ and $y_3$ (which corresponds to a~single or a triple row), we give proper values of $x$. Simple solutions are presented as single rows, composed solutions are presented as triple rows. To make it easier to check, we also put the obtained values of $z$.
		\begin{center}
			\begin{longtable}{|c|c||c|c|c||c|c|c|}
				\hline $y_2$ & $y_3$ & $x_1$ & $x_2$ & $x_3$ & $z_1$ & $z_2$ & $z_3$ \\ \hhline{|=|=#=|=|=#=|=|=|}
				$0$ & $0$ & $0$ & $1$ & $2$ & $0$ & $1$ & $2$ \\
				\hline \multirow{3}{*}{0} & \multirow{3}{*}{1}
				& $0$ & $2$ & $0$ & $0$ & $2$ & $1$ \\* \cline{3-8}
				&  & $0$ & $1$ & $1$ & $0$ & $1$ & $2$ \\* \cline{3-8}
				&  & $1$ & $2$ & $2$ & $1$ & $2$ & $0$ \\* \cline{3-8}
				\hline \multirow{3}{*}{0} & \multirow{3}{*}{2}
				& $0$ & $1$ & $0$ & $0$ & $1$ & $2$ \\* \cline{3-8}
				&  & $1$ & $2$ & $1$ & $1$ & $2$ & $0$ \\* \cline{3-8}
				&  & $0$ & $2$ & $2$ & $0$ & $2$ & $1$ \\* \cline{3-8}
				\hline \multirow{3}{*}{1} & \multirow{3}{*}{1}
				& $0$ & $0$ & $1$ & $0$ & $1$ & $2$ \\* \cline{3-8}
				&  & $1$ & $1$ & $2$ & $1$ & $2$ & $0$ \\* \cline{3-8}
				&  & $2$ & $0$ & $2$ & $2$ & $1$ & $0$ \\* \cline{3-8}
				\hline  $1$ & $2$ & $0$ & $1$ & $2$ & $0$ & $2$ & $1$ \\
				\hline \multirow{3}{*}{2} & \multirow{3}{*}{2}
				& $0$ & $0$ & $2$ & $0$ & $2$ & $1$ \\* \cline{3-8}
				&  & $1$ & $0$ & $1$ & $1$ & $2$ & $0$ \\* \cline{3-8}
				&  & $2$ & $1$ & $2$ & $2$ & $0$ & $1$ \\* \cline{3-8}
				\hline
				\caption{}
				\label{table_k3_1}
			\end{longtable}
		\end{center}
		\vspace{-30pt}
		
		\item[Case 2:]
		We present $M_2$-solutions for $y=[0,y_2,0]^T$ in Table~\ref{table_k3_2}. Here are the~corresponding formulas for~$z$.	
		$$z_1=0+x_1, \qquad z_2=y_2+x_2, \qquad z_3=0+x_2+x_3$$
		\begin{center}
			\begin{longtable}{|c||c|c|c||c|c|c|}
				\hline $y_2$ & $x_1$ & $x_2$ & $x_3$ & $z_1$ & $z_2$ & $z_3$\\
				\hhline{|=#=|=|=#=|=|=|} $0$ & $1$ & $0$ & $2$ & $1$ & $0$ & $2$ \\
				\hline $1$ & $1$ & $2$ & $0$ & $1$ & $0$ & $2$ \\
				\hline $2$ & $2$ & $1$ & $0$ & $2$ & $0$ & $1$ \\
				\hline
				\caption{}
				\label{table_k3_2}
			\end{longtable}
		\end{center}
		\vspace{-30pt}
		
		\item[Case 3 ($M_3$):]
		We present $M_3$-solutions for $y=[0,y_2,0]^T$ in Table~\ref{table_k3_3}. Here are the corresponding formulas for $z$.
		$$z_1=0+x_1, \qquad z_2=y_2+x_1+x_2, \qquad z_3=0+x_1+x_3$$
		
		\begin{center}
			\begin{longtable}{|c||c|c|c||c|c|c|}
				\hline $y_2$ & $x_1$ & $x_2$ & $x_3$ & $z_1$ & $z_2$ & $z_3$\\
				\hhline{|=#=|=|=#=|=|=|} $0$ & $0$ & $1$ & $2$ & $0$ & $1$ & $2$ \\
				\hline $1$ & $1$ & $0$ & $2$ & $1$ & $2$ & $0$ \\
				\hline $2$ & $2$ & $0$ & $1$ & $2$ & $1$ & $0$ \\
				\hline
				\caption{}
				\label{table_k3_3}
			\end{longtable}
		\end{center}
		\vspace{-30pt}

		\item[Case 3 ($M_4$):]
		We present $M_4$-solutions for $y=[0,y_2,0]^T$ in Table~\ref{table_k3_4}. Here are the corresponding formulas for $z$.	
		$$z_1=0+x_1, \qquad z_2=y_2+x_1+x_2, \qquad z_3=0+x_2+x_3$$
		
		\begin{center}
			\begin{longtable}{|c||c|c|c||c|c|c|}
				\hline $y_2$ & $x_1$ & $x_2$ & $x_3$ & $z_1$ & $z_2$ & $z_3$\\
				\hhline{|=#=|=|=#=|=|=|} $0$ & $1$ & $2$ & $0$ & $1$ & $0$ & $2$ \\
				\hline $1$ & $2$ & $0$ & $1$ & $2$ & $0$ & $1$ \\
				\hline $2$ & $1$ & $0$ & $2$ & $1$ & $0$ & $2$ \\
				\hline
				\caption{}
				\label{table_k3_4}
			\end{longtable}
		\end{center}
		\vspace{-33pt}	
	\end{description}
\end{proof}

\begin{lem}\label{lem:k3conf}
	Every hypertree $T$ with $m(T)=_3 0$ has a \dobrakonfiguracja.
\end{lem}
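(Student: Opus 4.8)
The plan is to distinguish according to whether $T$ has a vertex of degree divisible by $3$. If $d(u) =_3 0$ for some $u \in V(T)$, then $\{u\}$ is a \dobrysingleton\ and we are done, so assume every vertex has degree $=_3 1$ or $=_3 2$ and put $U = \{v \in V(T) : d(v) =_3 2\}$. In Proposition~\ref{liczba_krawedzi} each summand $d(v)-1$ is $=_3 0$ or $=_3 1$, and it is $=_3 1$ exactly when $v \in U$; hence $m(T) =_3 1 + |U|$, so $|U| =_3 -1 =_3 2$, and in particular $|U| \ge 2$. It remains to produce a \dwadobrakonfiguracja.

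I would first reduce to a cleaner statement. Let $v \in U$ and let $C$ be a component of $T-v$. Since $G_T$ is a tree, exactly one vertex $w^*$ of $C$ is adjacent to $v$ in $T$, and $d_C(x)=d_T(x)$ for every $x\in V(C)$ with the single exception $d_C(w^*)=d_T(w^*)-1$. Applying Proposition~\ref{liczba_krawedzi} to $C$ when $C$ is non-trivial, and again using that each $d_T(x)-1$ is $=_3 0$ or $=_3 1$, gives the congruence $m(C)=_3|U\cap V(C)|$. Moreover, if $m(C)=_3 1$ then $C$ is non-trivial, hence contains a \leafedge\ of $T$, and a short check---using that $w^*$ lies in at most one \leafedge\ of $C$, and that if $C$ has a single edge then that edge is a \leafedge\ of $T$---produces a leaf $u_1$ of $T$ lying in $V(C)$ with $u_1\ne w^*$, so that $u_1$ and $v$ are non-adjacent. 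For such $u_1$ the pair $\{u_1,v\}$ is a \dwadobryzbior, and since the components of $T-v$ partition its $m(T)-d(v)$ edges we get $|P_T(\{u_1,v\})| = m(T)-d(v)-m(C) =_3 0-2-1 =_3 0$; thus $\{u_1,v\}$ is a \dwadobrakonfiguracja. So it suffices to find $v\in U$ and a component $C$ of $T-v$ with $|U\cap V(C)|=_3 1$.

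This last step is the crux. I would take $S$ to be the minimal subtree of $G_T$ containing $U$; by minimality every leaf of $S$ lies in $U$, and, $G_T$ being bipartite, the unique $S$-neighbour of such a leaf is a hyperedge. Root $S$ at a leaf and pick a leaf $v$ of $S$ of maximum depth, with $S$-parent edge $e$. If $v$ is the only $S$-child of $e$, then $(S-v)-e$ is a connected subgraph of $G_T$ that avoids $v$ and every hyperedge through $v$ (none of the latter is a node of $S$), so it lies inside a single component $C$ of $T-v$, and this $C$ contains all of $U\setminus\{v\}$; hence $|U\cap V(C)| = |U|-1 =_3 1$. Otherwise $e$ has a second $S$-child $w$; since $v$ has maximum depth, $w$ has no proper descendant in $S$, so $w$ is itself a leaf of $S$, $w\in U$, and the part of $G_T$ hanging below $w$ is disjoint from $S$. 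Deleting $v$ then severs that part from the rest, so it forms a component $C$ of $T-v$ with $U\cap V(C)=\{w\}$, whence $|U\cap V(C)|=_3 1$. Either way we obtain the required $v$ and $C$, and the lemma follows. I expect the delicate points to be the ``exactly one vertex of $C$ is adjacent to $v$'' observation (needed both to locate $u_1$ and for the congruence $m(C)=_3|U\cap V(C)|$) and the extremal choice of $v$ as a deepest leaf of $S$.
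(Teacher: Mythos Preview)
Your argument is correct and takes a genuinely different route from the paper. The paper proves the lemma by induction on $m$: it fixes a vertex $v$ with $d(v)\equiv_3 2$, decomposes $T$ into the branches $T_1,\dots,T_d$ hanging off the edges through $v$, and recurses into a branch (or a sub-branch) whose edge count has the right residue. You instead give a direct, non-inductive proof: you consider the whole set $U=\{v:d(v)\equiv_3 2\}$, show $|U|\equiv_3 2$, form the Steiner subtree $S$ of $U$ inside the incidence tree $G_T$, and pick a deepest leaf $v$ of $S$. The extremal choice immediately isolates a component $C$ of $T-v$ with $|U\cap V(C)|\equiv_3 1$, and your congruence $m(C)\equiv_3|U\cap V(C)|$ then gives the \dwadobrakonfiguracja. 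This is shorter and more structural than the paper's induction; conversely, the paper's argument stays closer to the machinery used elsewhere and avoids introducing the auxiliary Steiner tree.

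Two small points to tighten. First, the parenthetical ``$w^*$ lies in at most one \leafedge\ of $C$'' is not true in general (take $C$ a hyperstar centred at $w^*$). The conclusion you need---a leaf $u_1$ of $T$ in $V(C)$ with $u_1\ne w^*$---still holds: if some \leafedge\ of $C$ avoids $w^*$ you are done as you say, and if every \leafedge\ of $C$ contains $w^*$ then $d_C(w^*)\ge 2$, so any such \leafedge\ is also a \leafedge\ of $T$ and any of its other vertices works. Second, your congruence $m(C)\equiv_3|U\cap V(C)|$ is derived only for non-trivial $C$, but you later apply it after finding $C$ with $|U\cap V(C)|\equiv_3 1$; just note that a trivial component of $T-v$ is a single vertex $w^*$ with $d_T(w^*)=1\notin U$, so the congruence holds vacuously there as well (both sides are $0$), and in your two crux cases $|U\cap V(C)|\ge 1$ forces $C$ to be non-trivial anyway.
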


\begin{proof}
	
	Denote $m=m(T)$. The proof is by induction on $m$. For $m=0$ the~assertion obviously holds. 
	Let $T$ be a hypertree with $m>0$ edges, $m=_30$, and suppose that there is no vertex of degree divisible by $3$ in~$T$.
	
	By Proposition~\ref{liczba_krawedzi}, there exists a vertex $v$ with $d(v)=_3 2$, denote $d=d(v)$. Let $\{e_1,\ldots,e_d\}$ be the set of edges incident with $v$. For $i=1,\ldots,d$ denote by $T_i$ the hypertree induced by $e_i$ and the components of $T-v$ intersecting with $e_i$ in $T$. Let $m_i=m(T_i)$ for $i=1,\ldots,d$.
	We have $m_1+\ldots+m_d=_3 0$. Since $d=_32$, not all $m_i=_3 1$.  We consider two cases.
	
	\begin{description}
		\item [Case 1:] $m_i=_3 0$ for some $i\in\{1,\ldots,d\}$
		
		Let $T'=T_i$. As $m_i<m$, by the induction hypothesis there exists a \dobrakonfiguracja\ $A'$ in $T'$. There are no vertices of degree divisible by $3$ in $T'$, hence $|A'|=2$. Let $u\in A'$ be the vertex which is not a leaf. If $A'$ contains a leaf from $e_i$, say $w$, then let $A=A'-\{w\}\cup\{x\}$, where $x$ is any leaf of $T$ not contained in $T_i$. Otherwise let $A=A'$. Then $A$ is a \dwadobryzbior\ in $T$. If $e_i\in P_{T'}(A')$ or if $u\in e_i$, then $P_T(A)=P_{T'}(A')\cup \big(E(T)-E(T_i)\big)$ and $|P_T(A)|=|P_{T'}(A')|+m-m_i=_30$. Otherwise $P_T(A)=P_{T'}(A')$. Therefore $A$ is a \dobrakonfiguracja\ in $T$. 
		
		\item[Case 2:] $m_i=_3 2$ for some $i\in\{1,\ldots,d\}$
		
		Let $S_1,\ldots,S_q$ be the components of $T_i\ominus \{e_i\}$. Denote $p_j=m(S_j)$ for $j=1,\ldots,q$. Note that $p_1+\ldots+p_q= m_i-1=_3 1$. Hence not all $p_j=_3 0$.
		
		Consider the case when for some $j$ we have $p_j=_3 1$. $S_j$ contains a leaf of $T$, say $x$. The set $A=\{v,x\}$ is a~\dwadobryzbior\ in $T$. Moreover, $|P_T(A)|=m - d - p_j =_3 0$. Therefore $A$ is a~\dobrakonfiguracja\ in $T$.
		
		Otherwise for some $j$, it holds that $p_j=_3 2$.
		Consider the hypertree $T'$ induced by $S_j$ and $e_i$. Note that $m > m(T') =_3 0$. By induction hypothesis there exists a \dobrakonfiguracja\ $A'$ in $T'$. There are no vertices of degree divisible by $3$ in $T'$, hence $|A'|=2$. If $A'$ contains a leaf from $e_i$, say $w$, then let $A=A'-\{w\}\cup\{x\}$, where $x$ is any leaf of $T$ not contained in $T_i$. Otherwise let $A=A'$. Then $A$ is a \dwadobryzbior\ in $T$. If $e_i\in P_{T'}(A')$ or if $u\in e_i$, then $P_T(A)=P_{T'}(A')\cup \big(E(T)-E(T_i)\big)\cup \big(E(T_i)-E(T')\big)$ and $|P_T(A)|=|P_{T'}(A')|+m-m_i+m_i-p_j-1=_30$. Otherwise $P_T(A)=P_{T'}(A')$. Therefore $A$ is a \dobrakonfiguracja\ in $T$.
	\end{description}
	
\end{proof}

\begin{thm}\label{tw:glowne3}
	Every hypertree is $3$-cordial.
\end{thm}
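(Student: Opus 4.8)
The plan is to follow the scheme of Section~\ref{sec:k2}, splitting the argument according to the residue of $m(T)$ modulo $3$. The main work is the case $m(T)=_3 0$, and for it I would imitate Case~1 of Theorem~\ref{tw:glowne2}: rather than proving $3$-cordiality directly I would prove by induction on $m(T)$ the stronger statement that \emph{every hypertree $T$ with $m(T)=_3 0$ admits, for each \dobrakonfiguracja\ $A$ in $T$, a $3$-cordial labeling strong on $A$} (such an $A$ exists by Lemma~\ref{lem:k3conf}, and the base case $m(T)=0$ is immediate from Proposition~\ref{prop:seedy}). In the induction step, given $T$ with $m(T)>0$, $m(T)=_3 0$ and a \dobrakonfiguracja\ $A$, the task is to find a \sprigowy\ \sprig\ $S$ with three edges such that $T\ominus S$ is a hypertree with $m(T\ominus S)=m(T)-3=_3 0$; the pair $(S,A)$ matches one of the cases of Lemma~\ref{lem:k3red}, i.e.\ $S$ is an $M_i$-\sprig\ for some $i\in\{1,2,3,4\}$ that is \nonincident\ with, \fullyincident\ with, or \containing\ $A$ as allowed there; and $A$ — or, when $S$ meets $A$, a small modification $A'$ of the same type — is again a \dobrakonfiguracja\ in $T\ominus S$. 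Given such an $S$, the induction hypothesis applied to $T\ominus S$ yields a $3$-cordial labeling strong on the relevant configuration, and Proposition~\ref{prop:seedy} together with Lemma~\ref{lem:k3red} promotes it to a $3$-cordial labeling of $T$ strong on $A$.

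Producing $S$ for every $T$ and every $A$ is the step I expect to be the main obstacle. I would distinguish cases by the shape of $A$ and by how much of $T$ lies away from $A$, in the spirit of Theorem~\ref{tw:glowne2}. If $A=\{u\}$ is a \dobrysingleton\ and $m(T)>d(u)$, then at least three edges of $T$ avoid $u$, and one can choose three such edges and a suitable vertex in each so as to obtain a \sprigowy\ $M_i$-\sprig\ \nonincident\ with $\{u\}$; since then $d_{T\ominus S}(u)=d(u)=_3 0$, the configuration survives automatically. If instead $T$ is a hyperstar at $u$, one takes three edges at $u$ and a leaf of each to form an $M_1$-\sprig\ \fullyincident\ with $\{u\}$, checking that $T\ominus S$ is trivial or $d_{T\ominus S}(u)=_3 0$. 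When $A=\{u_1,u_2\}$ is a \dwadobrakonfiguracja\ the bookkeeping is heavier: one first strips the pendant edge set $P_T(A)$ and only then carves the \sprig, choosing it so that $|P_{T\ominus S}(A)|=_3 0$ still holds; and whenever the only available \sprig\ must be taken \containing\ $A$ — which Lemma~\ref{lem:k3red} permits only for $M_2$-\sprigs\ — one drops the strong requirement and simply reapplies Lemma~\ref{lem:k3conf} to $T\ominus S$. This exhaustion over all configurations of $T$, with the few degenerate shapes settled by hand, is the considerably more intricate analogue of the short paragraph that sufficed for $k=2$; the interplay of four \sprig\ shapes, two kinds of \dobrakonfiguracja, and the invariant $|P_T(A)|=_3 0$ is what makes it long.

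\textbf{Cases $m(T)=_3 1$ and $m(T)=_3 2$.} These reduce to the main case as in Case~2 of Theorem~\ref{tw:glowne2}. If $m(T)=_3 1$, pick a \leafedge\ $e$; then $T\ominus\{e\}$ has $m(T)-1=_3 0$ edges, so $T-e$ has a $3$-cordial labeling $f$ by the main case and Proposition~\ref{prop:seedy}, and since the edge labels of $T-e$ under $f$ are perfectly balanced, adding the single edge $e$ keeps them within $1$ of one another; hence $f$ is $3$-cordial on $T$. If $m(T)=_3 2$ and $m(T)\ge 5$, remove two suitably chosen distinct \leafedges\ $e_1,e_2$ so that $T\ominus\{e_1,e_2\}$ is again a hypertree (with $m(T)-2=_3 0$ edges), label it by the main case, and extend; the only point to watch is to choose $e_1,e_2$ — and, through Proposition~\ref{prop:seedy}, the labels of the newly isolated leaves — so that $f(e_1)\ne f(e_2)$, which again keeps the edge labels within $1$. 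The finitely many small cases with $m(T)\le 2$ are checked directly.
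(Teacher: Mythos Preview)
Your proposal is correct and follows essentially the same approach as the paper: the same strengthened induction hypothesis for $m=_3 0$ (a $3$-cordial labeling strong on a given \dobrakonfiguracja), the same case split on the type of $A$ and on whether the sprig can be taken \nonincident, \fullyincident, or \containing\ $A$, and the same reductions for $m=_3 1,2$. One cosmetic difference: for $m=_3 2$ the paper removes exactly one leaf $v_i$ from each of two \leafedges\ $e_1,e_2$, labels $T-\{v_1,v_2\}$, and then assigns $f(v_1),f(v_2)$ from a small table to force $f(e_1)\ne f(e_2)$; this is slightly cleaner than your version via $T\ominus\{e_1,e_2\}$ and Proposition~\ref{prop:seedy}, since it makes the two free labels explicit rather than hiding them among possibly many reinserted isolated vertices, and it needs no separate treatment of $m(T)=2$.
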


\begin{proof} Denote $m=m(T)$. We divide the proof into three cases.
	\begin{description}
		\item[Case 1:] $m=_{3} 0$
		
		By Lemma~\ref{lem:k3conf} every hypertree $T$ with $m=_{3} 0$ has a \dobrakonfiguracja. We prove a stronger statement: if $T$ is a hypertree with with $m=_{3} 0$ and $A$ is a \dobrakonfiguracja\ in $T$, then there exists a $3$-cordial labeling of $T$ strong on $A$.

		The proof is by induction on $m$. 
		For $m=0$ the assertion obviously holds. Let $T$ be a hypertree with $m>0$ edges, $m=_3 0$, and let $A$ be a \dobrakonfiguracja\ in $T$. 
		We will find a \sprigowy\ \sprig\ $S$, which satisfies the assumptions of Lemma~\ref{lem:k3red} and will be used in the induction step.

		First, we consider the case when $A$ is a \dobrysingleton. Let $A=\{u\}$. 
		If $m>d(u)$, then there exist a set $F$ containing three edges non-incident with $u$ such that $T\ominus F$ has at most one non-trivial component. Hence we can choose such three edges $e_1,e_2,e_3$ and vertices $v_i\in e_i$ for $i=1,2,3$ such that they can be arranged into a \sprigowy\ $M$-\sprig\ $S=(e_1,e_2,e_3;v_1,v_2,v_3)$ \nonincident\ with $A$, where $M\in\{M_1,\ldots,M_4\}$. Notice that $A$ is a \dobrakonfiguracja\ in $T\ominus S$.
		Otherwise, $m=d(u)$ and $T$ is a hyperstar with the central vertex $u$. We take as $S$ a \sprigowy\ $M_1$-\sprig\ \fullyincident\ with $A$ consisting of three edges incident with $u$ and one leaf from each of these edges. Observe that either $T\ominus S$ is the empty hypergraph or $A$ is a \dobrakonfiguracja\ in~$T\ominus S$. 
		
		Now we consider the case when $A$ is a  \dwadobrakonfiguracja. Let $A=\{u,v\}$ and $d(u)=_32$.
		If $|P_T(A)|>0$, then there exists a set $F$ containing three edges non-incident with any vertex of $A$ such that $T-F$ has at most one non-trivial component, thus there exists a \sprigowy\ $M$-\sprig\ $S$ \nonincident\ with $A$, where $M\in\{M_1,\ldots,M_4\}$. If $|P_T(A)|=0$ and $d(u)>2$ then we take as $S$ a \sprigowy\ $M_1$-\sprig\ $S$ \fullyincident\ with $A$ (consisting of three \leafedges\ incident with $u$ and suitably selected vertices). Observe that in both situations $A$ is a \dobrakonfiguracja\ in $T\ominus S$. If $|P_T(A)|=0$ and $d(u)=2$ then we take as $S$ a \sprigowy\ $M_2$-\sprig\ \containing\ $A$ (consisting of the~two edges incident with $u$, the edge incident with $v$ and suitably selected vertices).
		
		In each case we have found a \sprigowy\ \sprig\ $S$ such that (by induction hypothesis and Proposition~\ref{prop:seedy}) $T$, $A$ and $S$ satisfy the assumptions of Lemma~\ref{lem:k3red}. Therefore, by Lemma~\ref{lem:k3red}, $T$ has a~$3$-cordial labeling strong on~$A$.
		
		\item[Case 2:] $m=_{3} 1$
		
		Let $e$ be a \leafedge\ in $T$. By Case~1 and Proposition~\ref{prop:seedy}, $T-e$ has a $3$-cordial labeling $f$. Clearly, $f$ is also a $3$-cordial labeling of $T$, regardless of the induced value of $f(e)$.
		
		\item[Case 3:] $m=_{3} 2$
		
		Let $e_1$, $e_2$ be two \leafedges\ in $T$, and let $v_i$ be a leaf from $e_i$ for $i=1,2$. Let $Y_i=e_i-\{v_i\}$ for $i=1,2$. Let $T'=T-\{v_1,v_2\}$. By Case~1 and Proposition~\ref{prop:seedy}, $T'$ has a $3$-cordial labeling $f$. \linebreak For $i=1,2$ let $y_i=\sum\limits_{u\in Y_i}f(u)$. We have $n_a(f)\le n_{a+1}(f)\le n_{a+2}(f)$ for some $a\in\ztrzy$. We extend $f$ to a labeling of $T$ by defining $f(v_1)$ and $f(v_2)$. The values of $f(v_1)$ and $f(v_2)$ depending on $y_1$ and $y_2$ are given in Table~\ref{table_k3_theorem}.
		\begin{center}		
			\begin{longtable}{|c|c||c|c||c|c|}
				\hline $y_1$ & $y_2$ & $f(v_1)$ & $f(v_2)$ & $f(e_1)$ & $f(e_2)$ \\  \hhline{|=|=#=|=#=|=|} b & b & a & a+1 & a+b & a+b+1 \\ 
				\hline b & b+1 & a & a+1 & a+b & a+b+1\\ 
				\hline b & b+2 & a+1 & a & a+b+1 & a+b+2\\ 
				\hline  
				\caption{}
				\label{table_k3_theorem}
			\end{longtable} 			
		\end{center}
		\vspace{-40pt}
		\qedhere
	\end{description}
\end{proof}

\section{Conclusions}

We believe that our method can work to prove $k$-cordiality of hypertrees for larger values of $k$. However, the complication of the arguments is growing and some structures may need a special treatment. Therefore, one probably need some new ideas to work with larger values of $k$.

A hypergraph $H$ is called \emph{$d$-degenerate} if every subhypergraph $H'$ of $H$ (meaning $V(H')\subseteq V(H)$ and $E(H')\subseteq E(H)$) has a vertex of degree at most $d$.
In case of graphs, $1$-degeneracy coincides with being a forest. However, in general hypergraphs, the class of $1$-degenerated hypergraphs is much wider than the~class of hyperforests (where a hyperforest is understood as a disjoint union of hypertrees). In~this paper, we proved that all hypertrees are $2$-cordial. 
It seems to be a natural next step to determine if the~following conjecture is true:

\begin{conj}
	All $1$-degenerated connected hypergraphs are $2$-cordial.
\end{conj}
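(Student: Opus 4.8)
The plan is to follow the method of Section~\ref{sec:k2}, enriched with the ``helpful configuration'' idea of Section~\ref{sec:k3}: induct on $m=m(H)$, split into cases according to the parity of $m$, with $m=_2 0$ the heart of the matter. The crucial difference from the hypertree case is that a $1$-degenerate connected hypergraph need not be a hypertree --- its incidence graph $G_H$ may contain cycles --- so both the \leafedge\ recursion and the edge-count identity of Proposition~\ref{liczba_krawedzi} break down and must be replaced. Two properties of $1$-degeneracy do survive and will be used throughout: it is hereditary (every subhypergraph of a $1$-degenerate hypergraph is $1$-degenerate), and it is equivalent to admitting a \emph{degeneracy ordering} $v_1,\ldots,v_n$ of the vertices in which each $v_i$ lies in at most one edge contained in $\{v_1,\ldots,v_i\}$; in particular $m\le n$, and every non-trivial connected $1$-degenerate hypergraph has a vertex of degree exactly $1$.

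The case $m=_2 1$ is handled by a short reduction. First I would show that every connected hypergraph with at least two edges has a \emph{pendant edge}, i.e.\ an edge $e$ with $H\ominus\{e\}$ connected: in the auxiliary graph $\mathcal{A}$ on the vertex set $E(H)$ with $e\sim e'$ whenever $e\cap e'\ne\emptyset$ --- which is connected whenever $H$ is --- take $e$ to be a non-cut vertex of $\mathcal{A}$; then $\mathcal{A}-e$ is connected, which forces all remaining edges of $H$ into a single component of $H\ominus\{e\}$, and since every surviving vertex lies in a surviving edge, $H\ominus\{e\}$ is connected. Given $H$ with $m=_2 1$ and $m\ge 2$ (the case $m=1$ being trivial), the hypergraph $H\ominus\{e\}$ is connected, $1$-degenerate and has $m-1=_2 0$ edges, hence is $2$-cordial by induction; re-introducing the removed isolated vertices (Proposition~\ref{prop:seedy}) yields a $2$-cordial labeling of $H-e$, and restoring $e$ changes an edge-label count by at most $1$, so $H$ is $2$-cordial.

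For $m=_2 0$ I would broaden the family of ``helpful'' structures. Proposition~\ref{liczba_krawedzi} fails here, and an even-degree vertex need not exist (for instance the $1$-degenerate hypergraph on $\{v,a,b,c,d\}$ with edges $\{v,a,b\}$, $\{v,c,d\}$, $\{v,a,c\}$, $\{a,c\}$ has all degrees odd), so I would call $A\subset V$ \emph{helpful} if it is either a single vertex of even degree or a pair $\{u_1,u_2\}$ of non-adjacent vertices with $d(u_1)+d(u_2)$ even, in both cases subject to a component condition modeled on Definition~\ref{def:helpful_conf}. The point of the pair is exactly the one exploited in Section~\ref{sec:k3}: if a labeling $f$ is \emph{strong on} $A=\{u_1,u_2\}$, so that $f(u_1)\ne f(u_2)$, then adding $1$ on $A$ flips the labels of exactly $d(u_1)+d(u_2)$ edges --- an even number --- while leaving every vertex-label count unchanged, so it is a safe correction move. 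One then proves, by a separate induction on $m$ using the degeneracy ordering, that every connected $1$-degenerate hypergraph with $m=_2 0$ contains a helpful set --- the delicate sub-case being when no vertex has even degree and every two vertices are adjacent, which should turn out to contradict $1$-degeneracy once $m\ge 2$ --- and strengthens the statement to: \emph{for every helpful set $A$ in $H$ there is a $2$-cordial labeling strong on $A$}. The induction step removes a \sprigowy\ $M_1$- or $M_2$-\sprig\ $S$ that is \nonincident\ or \fullyincident\ with $A$ (or \containing\ $A$), chosen so that $H\ominus S$ is connected, $1$-degenerate and still carries a helpful set; an extension of Lemma~\ref{lem:k2red} to pair-configurations --- proved just as Lemma~\ref{lem:solut} and Lemma~\ref{lem:k3red}, by tabulating the possible values of the new labels --- then lifts the labeling of $H\ominus S$ to one of $H$ strong on $A$.

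The main obstacle is the induction step in the case $m=_2 0$: one must produce a \sprigowy\ \sprig\ $S$ that simultaneously (i) leaves $H\ominus S$ connected, (ii) is appropriately incident with the chosen helpful set, and (iii) leaves a helpful set behind in $H\ominus S$. For hypertrees all of this is easy, because deleting \leafedges\ is a local operation and the degree arithmetic is governed by Proposition~\ref{liczba_krawedzi}; for $1$-degenerate hypergraphs, deleting edges can disconnect $H$, and the parity accounting must be redone through the degeneracy ordering and the possible cyclic pieces. By analogy with the passage from $k=2$ to $k=3$ in the hypertree setting, I would also expect that one or two further helpful-set types (for instance suitable triples) may be needed for both the existence lemma and the case analysis of the induction step to close up; the labeling-extension lemmas themselves should stay routine, so the weight of a full proof would rest on this enlarged structural classification.
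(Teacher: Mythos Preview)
The statement you are attempting is not a theorem of the paper but a \emph{conjecture} posed in the Conclusions; the paper offers no proof, so there is nothing to compare your argument against. What remains is to assess whether your outline stands on its own, and here there is a genuine gap beyond the incompleteness you already acknowledge.

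Your reduction of the odd-$m$ case via a pendant edge (a non-cut vertex of the intersection graph of $E(H)$) is sound and is a nice replacement for the \leafedge\ used in the hypertree proof. The difficulty is in the even-$m$ case, and specifically in your ``helpful pair''. You write that if $f$ is strong on $A=\{u_1,u_2\}$ with $f(u_1)\ne f(u_2)$, then adding $1$ on $A$ ``is a safe correction move''. But in $\mathbb{Z}_2$ this operation simply swaps the labels of $u_1$ and $u_2$: every vertex-label count is unchanged, and (since $u_1,u_2$ are non-adjacent and the labeling is strong on $A$) the edge-label counts are unchanged as well. It is not a correction move at all; it is a no-op on all the quantities you need to control. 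The mechanism that makes the pair work for $k=3$ in Lemma~\ref{lem:solut} is that adding a nonzero $x$ to two distinct labels in $\mathbb{Z}_3$ genuinely shifts the multiset $\{f(u_1),f(u_2)\}$ to a different pair, creating a vertex-count imbalance that can absorb a repeated label on the \sprig\ vertices. In $\mathbb{Z}_2$ there is only one nonzero shift and it fixes the multiset $\{0,1\}$, so the analogue of the composed-solution trick collapses. Concretely, your extension of Lemma~\ref{lem:k2red} to pair-configurations cannot go through in the bad case $y_1\ne y_2$ of an $M_1$-\sprig: you still need to place two equal labels on $v_1,v_2$, and no operation on $A$ will free up the required slack in the vertex counts.

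Thus the proposal, as written, does not prove the conjecture: the example you yourself give (all degrees odd) lands you squarely in a situation where the only helpful set available is a pair, and your pair does not furnish a usable correction. Any rescue would need a different invariant for the pair --- for instance, requiring $f(u_1)=f(u_2)$ so that adding $1$ on $A$ shifts vertex counts by $\pm 2$, at the cost of abandoning the paper's notion of ``strong on $A$'' and having to re-verify the edge-balance side from scratch --- or a different structural gadget altogether. Until such a replacement is found and the existence and \sprig-selection lemmas are actually proved, this remains a plan rather than a proof, consistent with the statement's status as an open conjecture.
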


\bibliographystyle{alpha}
\bibliography{mybibfile_arxiv}
\label{sec:biblio}

\end{document}